\newtheorem{theorem}{Theorem}[section]
\newtheorem{lemma}[theorem]{Lemma}
\theoremstyle{definition}
\newtheorem{proposition}[theorem]{Proposition}
\newtheorem{Problem}{Problem}
\newtheorem{remark}{Remark}
\theoremstyle{remark}
\numberwithin{equation}{section}
\begin{document}

\title[Subset Grunwald-Wang Theorem]{A Generalization of the Grunwald-Wang Theorem for $n^{th}$ Powers}


\author{Bhawesh Mishra}
\address{Dunn Hall 384, Department of Mathematical Sciences, The University of Memphis}
\email{bmishra1@memphis.edu}
\urladdr{https://bhaweshmishra.com/}
\thanks{}

\subjclass[2020]{Primary 11A15 11A07; Secondary 11R37}
\keywords{Grunwald-Wang, Power Residue, }
\date{\today}
\dedicatory{}

\commby{}

\begin{abstract}
Let $n$ be a natural number greater than $2$ and $q$ be the smallest prime dividing $n$. We show that a finite subset $A$ of rationals, of cardinality at most $q$, contains a $n^{th}$ power in $\mathbb{Q}_{p}$ for almost every prime $p$ if and only if $A$ contains a perfect $n^{th}$ power, barring some exceptions when $n$ is even. This generalizes the Grunwald-Wang theorem for $n^{th}$ powers, from one rational number to finite subsets of rational numbers. We also show that the upper bound $q$ in this generalization is optimal for every $n$. 
\end{abstract}

\maketitle

\section{Introduction}
In this note, the phrase \textit{almost every prime} means all but finitely many primes. A rational (resp. an integer) $a$ will be called \textit{perfect $n^{th}$ power} when $a = r^{n}$ for some rational (resp. some integer) $r$. 

Using elementary methods, one can show that a rational number $a$ is a square in $\mathbb{Q}_{p}$ for almost every prime $p$ if and only if $a$ is a perfect square. Following is a generalization of this statement from square to an arbitrary power, a version of which for general number fields can be found in \cite[Chapter X, Theorem 1]{ArtinTate}.

\begin{proposition}\textbf{\big(Grunwald-Wang Theorem for $n^{th}$ Powers\big)}
Let $a \in\mathbb{Q}$, $n \in\mathbb{N}$. Then, $a$ is a $n^{th}$ power in $\mathbb{Q}_{p}$ for almost every prime $p$ if and only if one of the following is true:
\begin{enumerate}
    \item $a$ is a $n^{th}$ power in $\mathbb{Q}$. 

    \item $8 \mid n$ and $a = 2^{\frac{n}{2}} \cdot b^{n}$ for some $b \in\mathbb{Q}$. 
\end{enumerate}
\label{GrunWang}
\end{proposition}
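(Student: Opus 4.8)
The plan is to prove both directions by reducing to the case where $n=\ell^{k}$ is a prime power and then combining Kummer theory with the Chebotarev density theorem. The reduction rests on the elementary fact that in any abelian group $A$ an element lies in $A^{n}$ if and only if it lies in $A^{\ell^{v_{\ell}(n)}}$ for every prime $\ell\mid n$ (a Bézout argument using that the integers $n/\ell^{v_{\ell}(n)}$ are globally coprime). Applying this with $A=\mathbb{Q}_{p}^{*}$ and with $A=\mathbb{Q}^{*}$ shows that $a$ is an $n^{th}$ power in $\mathbb{Q}_{p}$ (for a fixed $p$, or for almost all $p$) if and only if it is an $\ell^{v_{\ell}(n)}$-th power there for each $\ell\mid n$, and likewise for the two global conclusions. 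A short computation shows that the exceptional hypothesis $8\mid n$, $a=2^{n/2}b^{n}$ becomes, after dividing by $n^{th}$ powers, the statement that $a$ is a perfect $\ell^{v_{\ell}(n)}$-th power for every odd $\ell\mid n$, while at $\ell=2$ one has $k:=v_{2}(n)\ge 3$ and $a\equiv 2^{2^{k-1}}\pmod{(\mathbb{Q}^{*})^{2^{k}}}$. Thus it suffices to treat each prime power $\ell^{k}$ separately.

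For the easy direction I would dispose of case (1) immediately and treat case (2) by showing $2^{n/2}$ is an $n^{th}$ power in $\mathbb{Q}_{p}$ for every odd $p$ (so $b^{n}$ being already an $n^{th}$ power finishes it). The key elementary fact is that at least one of $2,-2,-1$ is a square in $\mathbb{Q}_{p}$, since the product of the three Legendre symbols equals $\left(\tfrac{4}{p}\right)=1$. If $\sqrt{2}\in\mathbb{Q}_{p}$ then $2^{n/2}=(\sqrt{2})^{n}$; if $\sqrt{-2}\in\mathbb{Q}_{p}$ then $2^{n/2}=(\sqrt{-2})^{n}=(-2)^{n/2}$, which equals $2^{n/2}$ because $4\mid n$; and if $i\in\mathbb{Q}_{p}$ then $2^{n/2}=(1+i)^{n}$, using $(1+i)^{8}=16$ and $8\mid n$.

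For the hard direction, assume $a\in(\mathbb{Q}_{p}^{*})^{\ell^{k}}$ for almost all $p$ and reduce $a$ modulo $(\mathbb{Q}^{*})^{\ell^{k}}$. Put $K=\mathbb{Q}(\zeta_{\ell^{k}})$. For a good prime $p\equiv 1\pmod{\ell^{k}}$, i.e.\ a prime splitting completely in $K$, the completion at a prime above $p$ is $\mathbb{Q}_{p}$ and already contains $\zeta_{\ell^{k}}$, so $a\in(\mathbb{Q}_{p}^{*})^{\ell^{k}}$ if and only if those primes split completely in the Kummer extension $K(a^{1/\ell^{k}})/K$. The primes splitting completely in $K$ have density $1/[K:\mathbb{Q}]$, so if almost all of them split in $K(a^{1/\ell^{k}})$ then comparing Chebotarev densities forces $[K(a^{1/\ell^{k}}):\mathbb{Q}]\le[K:\mathbb{Q}]$, hence $K(a^{1/\ell^{k}})=K$ and $a\in(K^{*})^{\ell^{k}}$. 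When $\ell$ is odd, $[K:\mathbb{Q}]=\ell^{k-1}(\ell-1)$ is prime to $\ell$, and a norm plus Bézout argument descends this to $a\in(\mathbb{Q}^{*})^{\ell^{k}}$, which is case (1).

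The remaining case $\ell=2$ is where I expect the main obstacle. Here $\mathrm{Gal}(K/\mathbb{Q})\cong\mathbb{Z}/2\times\mathbb{Z}/2^{k-2}$ is non-cyclic for $k\ge 3$, so the descent from $(K^{*})^{2^{k}}$ to $(\mathbb{Q}^{*})^{2^{k}}$ genuinely fails, and this is exactly the Grunwald-Wang exception. The plan is to identify the subgroup of $\mathbb{Q}^{*}/(\mathbb{Q}^{*})^{2^{k}}$ of classes that die in $K^{*}/(K^{*})^{2^{k}}$ and to prove, via the relations among $\sqrt{2},\sqrt{-2},i$ inside $K$, that for $k\ge 3$ it is generated by the class of $2^{2^{k-1}}$ and is trivial for $k\le 2$. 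One must then feed in the condition at the primes $p\not\equiv 1\pmod{2^{k}}$ to eliminate every candidate except $2^{2^{k-1}}$: the easy direction already shows $2^{2^{k-1}}$ survives, while a reciprocity-and-Dirichlet argument (the prototype being the infinitude of primes $p\equiv 5\pmod 8$ at which $4$ fails to be a fourth power, which is precisely the obstruction for $k=2$) rules out the others. Carrying out this final bookkeeping over all residue classes modulo $2^{k}$ is the crux of the argument.
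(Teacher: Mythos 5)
The paper never proves this proposition: it is quoted as a known classical result from Artin--Tate (Chapter X, Theorem 1), with only a remark that the known proofs establish it via Galois cohomology. So your attempt must be judged on its own merits. Its skeleton is sound in places --- the reduction to prime powers $\ell^{k}$, the easy direction via the fact that one of $2,-2,-1$ is a square in $\mathbb{Q}_{p}$, and the Chebotarev--Kummer step showing $a\in(K^{*})^{\ell^{k}}$ for $K=\mathbb{Q}(\zeta_{\ell^{k}})$ are all correct --- but there are two genuine gaps.

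First, in the odd case your descent from $a\in(K^{*})^{\ell^{k}}\cap\mathbb{Q}^{*}$ to $a\in(\mathbb{Q}^{*})^{\ell^{k}}$ rests on the claim that $[K:\mathbb{Q}]=\ell^{k-1}(\ell-1)$ is prime to $\ell$; this is false for every $k\ge 2$. What the norm-plus-B\'ezout argument actually yields is the following: from $a^{\ell^{k-1}(\ell-1)}=N_{K/\mathbb{Q}}(b)^{\ell^{k}}$ and $\gcd\big(\ell^{k-1}(\ell-1),\ell^{k}\big)=\ell^{k-1}$ one gets $a^{\ell^{k-1}}\in(\mathbb{Q}^{*})^{\ell^{k}}$, hence only $a\in(\mathbb{Q}^{*})^{\ell}$ --- not $a\in(\mathbb{Q}^{*})^{\ell^{k}}$. (Norming down merely to the degree-$(\ell-1)$ fixed field pushes the same problem one level down rather than solving it.) The standard repair is to show $H^{1}\big(\mathrm{Gal}(K/\mathbb{Q}),\mu_{\ell^{k}}\big)=0$ for odd $\ell$: the group is cyclic, generated by $\sigma_{g}:\zeta\mapsto\zeta^{g}$ with $g$ a primitive root mod $\ell^{k}$, so $g\not\equiv 1\pmod{\ell}$, $g-1$ is invertible mod $\ell^{k}$, and $\sigma_{g}-1$ is surjective on $\mu_{\ell^{k}}$; then the cocycle $\sigma\mapsto\sigma(b)/b$ is a coboundary $\sigma(\zeta)/\zeta$, and $b/\zeta\in\mathbb{Q}^{*}$ gives $a=(b/\zeta)^{\ell^{k}}$. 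Some argument of this strength (or a correctly set-up induction) is needed; as written, the odd case is not proved.

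Second, the case $\ell=2$ --- which is the entire content of the theorem, both producing the exception $2^{n/2}$ when $8\mid n$ and proving there is \emph{no} exception when $8\nmid n$ --- is left as a plan whose final step you yourself call the crux. Worse, the plan's stated target is incorrect: you claim the kernel of $\mathbb{Q}^{*}/(\mathbb{Q}^{*})^{2^{k}}\to K^{*}/(K^{*})^{2^{k}}$ is trivial for $k\le 2$, but for $k=2$, $K=\mathbb{Q}(i)$, the class of $-4=(1+i)^{4}$ is a nontrivial kernel element (it is killed in $K$ but $-4\notin(\mathbb{Q}^{*})^{4}$). The case $n=4$ has no Grunwald--Wang exception only because $-4$ fails to be a fourth power in $\mathbb{Q}_{p}$ for the primes $p\equiv 3\pmod 4$, i.e.\ the elimination happens at the level of local conditions at primes not split in $K$, not at the kernel computation. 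This residue-class analysis at primes $p\not\equiv 1\pmod{2^{k}}$ is precisely the step Grunwald and Whaples got wrong historically; deferring it means the hard half of the statement, exceptional case included, remains unproven.
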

Roughly speaking, the Grunwald-Wang theorem states that given any finite subset $S$ of primes and local continuous characters $\chi_{p}$ of $\mathbb{Q}_{p}$ for every $p\in S$, there exists a global character $\chi$ of $\mathbb{Q}$ that extends $\chi_{p}$ for every $p\in S$. We refer the readers to \cite[Chapter X, Theorem 5]{ArtinTate} for a precise statement of the Grunwald-Wang theorem. 

Proposition \ref{GrunWang} and the Grunwald-Wang theorem are closely related, and they often go by the same name. However, in this article, we will call Proposition \ref{GrunWang}, the Grunwald-Wang theorem for $n^{th}$ powers. The proof of the Grunwald-Wang theorem, that the author is aware of, uses Galois cohomology to first establish the Proposition \ref{GrunWang}, which is equivalent to the fact that the natural map 
\begin{equation}
\mathbb{Q}^{\times}/\big(\mathbb{Q}^{\times}\big)^{n} \longrightarrow \prod_{p \notin S} \Bigg(\mathbb{Q}_{p}^{\times}/\big(\mathbb{Q}_{p}^{\times}\big)^{n}\Bigg), \label{injective}    
\end{equation}
induced by the embedding $\mathbb{Q}^{\times} \xhookrightarrow{} \mathbb{Q}_{p}^{\times}$, is injective, except in the case $8 \mid n$, when the kernel of the map \[\mathbb{Q}^{\times} \longrightarrow \prod_{p \notin S} \Bigg(\mathbb{Q}_{p}^{\times}/\big(\mathbb{Q}_{p}^{\times}\big)^{n}\Bigg)\] is equal to $\big(\mathbb{Q}^{\times}\big)^{n} \bigcup 2^{n/2}\big(\mathbb{Q}^{\times}\big)^{n}$. 

Let $K$ be a local or global field, $G_{K}$ be the absolute Galois group of $K$, $\mu_{n}$ be the group of $n^{th}$ roots of unity and let $H^{1}(G_{K}, \mu_{n})$ denote the first continuous cohomology with respect to the profinite topology on $G_{K}$. In other words, $H^{1}(G_{K}, \mu_{n})$ is the group of all the continuous group homomorphisms from $G_{K}$ to $\mu_{n}$. Using the fact that for any local or global field $K$, $H^{1}(G_{K}, \mu_{n}) \cong K^{\times}/\big(K^{\times}\big)^{n}$ (for instance, see \cite[pp. 344]{NeukrichCohom}), injectivity of \eqref{injective} is equivalent to the injectivity of \[H^{1}(G_{\mathbb{Q}}, \mu_{n}) \longrightarrow \prod_{p \notin S}H^{1}(G_{\mathbb{Q}_{p}}, \mu_{n}), \] which elucidates the Galois-cohomological nature of Proposition \ref{GrunWang}. Then the Grunwald-Wang theorem is established from Proposition \ref{GrunWang} using class-field theory. 

Grunwald first published a proof of the Grunwald-Wang theorem and Whaples published another proof, in \cite{Grunwald} and \cite{Whaples} respectively. However, both of them had overlooked an exceptional case, which Wang first demonstrated in 1948, and he ultimately outlined the exact conditions when the original statement has exceptions \cite{Wang1,Wang2}. Hence, the correct result is aptly called the Grunwald-Wang theorem. A short but fascinating account of this whole story can be found in \cite[Section 5.4, pp. 27-29]{Roquette}. Moreover, the Grunwald-Wang theorem has been generalized in various directions: for instance, when $\mathbb{Z}/n\mathbb{Z}$ is replaced with other finite modules \cite[VIII.6.10]{NeukrichCohom}, a version of it for abelian varieties \cite{Creutz}, and an effective Grunwald-Wang Theorem \cite{Effective}. 

In this article, we generalize the Grunwald-Wang theorem for $n^{th}$ powers from one rational to a subset of rationals, using surprisingly modest methods. We start our discussion with an illustrative example.

\subsection{Motivation}
In contrast to dealing with one rational number that is $n^{th}$ power in $\mathbb{Q}_{p}$ for almost every prime $p$, one can consider sets $A$ that contain a $n^{th}$ power in $\mathbb{Q}_{p}$ for almost every prime $p$. This scenario is of interest because there exist finite subsets $A$ of rationals such that none of the elements in $A$ satisfy $(1)$ and $(2)$ of Proposition \ref{GrunWang}, and yet $A$ contains a $n^{th}$ power in $\mathbb{Q}_{p}$ for almost every prime. For instance, we will demonstrate examples of such sets for $n = 2, 3$ and 6. Let $a$ and $b$ be distinct non-zero integers. Define 
\begin{equation}
    A = \big\{a, b, ab, ab^{2}\big\}. \label{intro1}
\end{equation} For any prime $p\neq 3$ and $p \nmid ab$, we have three cases:
\begin{enumerate}
    \item $a$ or $b$ is a cubic residue modulo $p$. 

    \item $a$ and $b$ lie in the same non-trivial coset in $\big(\mathbb{F}_{p}\big)^{\times}/\big(\mathbb{F}_{p}^{\times}\big)^{3}$. In this case, $ab^{2}$ is a cubic residue modulo $p$. 

    \item $a$ and $b$ lie in different non-trivial cosets in$\big(\mathbb{F}_{p}\big)^{\times}/\big(\mathbb{F}_{p}^{\times}\big)^{3}$. In this case, $ab$ is a cubic residue modulo $p$. 
\end{enumerate}
In all the three cases, $A$ contains a cubic residue modulo $p$.  Then, the Hensel's lemma implies that the set $A$ contains a $3^{rd}$ power in $\mathbb{Q}_{p}$ for almost every prime $p$. 

Moreover, the set $A^{(2)} := \{a^{2}, b^{2}, a^{2}b^{2}, a^{4}b^{2}\}$ contains a $6^{th}$ power modulo $p$ for primes $p \nmid 3ab$. Now, the Hensel's lemma implies that the set $A^{(2)}$ contains a $6^{th}$ power in $\mathbb{Q}_{p}$ for almost every prime $p$. If we choose $a$ and $b$ to be distinct and cube-free, then $A^{(2)}$ does not contain a perfect $6^{th}$ power. This shows that there exists infinitely many subsets of $\mathbb{Q}^{\times}/\big(\mathbb{Q}^{\times}\big)^{3}$ of cardinality four that do not contain any $6^{th}$ power, yet contain a $6^{th}$ power in $\mathbb{Q}_{p}$ for almost every prime $p$.

One may wonder if there exist a set of rationals of smaller cardinality that does not contain any perfect $6^{th}$ power but contains a $6^{th}$ power in $\mathbb{Q}_{p}$ for almost every $p$. The answer is yes. Consider the set $B = \{p_{1}, p_{2}, p_{1}p_{2}\}$ for any two distinct odd primes $p_{1}$ and $p_{2}$. $B$ contains a quadratic residue modulo every odd prime $p\notin\{p_{1}, p_{2}\}$ as a consequence of \[\bigg(\frac{p_{1}p_{2}}{p}\bigg) = \bigg(\frac{p_{1}}{p}\bigg)\bigg(\frac{p_{2}}{p}\bigg),\] where $\bigg(\frac{\cdot}{p}\bigg)$ is the Legendre symbol with respect to the prime $p$. Then, the Hensel's Lemma implies that $B$ contains a square in $\mathbb{Q}_{p}$ for almost every prime $p$. 

Moreover, the set $B^{(3)} = \{p_{1}^{3}, p_{2}^{3}, p_{1}^{3}p_{2}^{3}\}$ contains a $6^{th}$ power modulo $p$ for almost every prime $p$. This time, we have constructed infinitely many subsets of $\mathbb{Q}^{\times}/\big(\mathbb{Q}^{\times}\big)^{2}$ cardinality $3$ such that they do not contain any $6^{th}$ power but each of them contain a $6^{th}$ power in $\mathbb{Q}_{p}$ for almost every prime $p$. 

One may further ask if there exists a subset $A$ of rationals of cardinality $2$ such that $A$ does not contain a perfect square but $A$ contains a square in $\mathbb{Q}_{p}$ for almost every prime $p$. Does there exists a subset $A$ of three rationals that does not contain a perfect cube but contains a cube in $\mathbb{Q}_{p}$ for almost every $p$. The main result in this article (see Theorem \ref{GrunWang}) shows that the answer to both these questions is no. On the other hand, there does exist a subset $A$ of rationals with cardinality $2$ that contains a $6^{th}$ power in $\mathbb{Q}_{p}$ for almost every prime $p$ without containing a perfect $6^{th}$ power. However, such a set must necessarily be of the form \[\Big\{ -27 \cdot \alpha_{1}^{6}, \alpha_{2}^{2} \Big\} \text{  for distinct rationals } \alpha_{1}, \alpha_{2}.\]
Therefore, one may consider the following general problem. 
\begin{Problem}
Given a natural number $n \geq 2$, determine the largest integer $k(n)$ such that the following is satisfied:
\begin{center}
    A subset $A$ of rationals, of cardinality $\leq k(n)$, contains a $n^{th}$ power in $\mathbb{Q}_{p}$ for almost every prime $p$ if and only if either $(1)$ $A$ contains a $n^{th}$ power or $(2)$ elements of $A$ has a short and finite list of forms.
\end{center}
 \label{problem1}
\end{Problem}

Some partial answers of the problem exist in the literature. For instance, the following classical result, which was first obtained by Fried in \cite{Fr} and was later rediscovered by Filaseta and Richman in \cite{FiRi}, shows that $k(2) = 2$. 
\begin{proposition}
Let $A = \{a_{1}, a_{2}, \ldots, a_{\ell}\}$ be a finite set of non-zero integers. The following two conditions are equivalent:
\begin{enumerate}
    \item $A$ contains a square in $\mathbb{Z}_{p}$, for almost every prime $p$. 

    \item There exists a subset $B$ of $\{1, 2, \ldots, \ell\}$, of odd cardinality, such that $\prod_{j \in B} a_{j}$ is a perfect square. 
\end{enumerate}
\label{Fr}
\end{proposition}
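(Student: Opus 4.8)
The plan is to translate both conditions into $\mathbb{F}_2$-linear algebra on the classes of the $a_j$ in $\mathbb{Q}^{\times}/(\mathbb{Q}^{\times})^2$, and then to use Dirichlet's theorem to realize prescribed patterns of Legendre symbols. Throughout I would restrict to the cofinite set of odd primes $p$ dividing none of the $a_j$; for such $p$ each $a_j$ is a unit in $\mathbb{Z}_p$, and by Hensel's lemma $a_j$ is a square in $\mathbb{Z}_p$ exactly when $\left(\frac{a_j}{p}\right) = 1$. Let $p_1,\dots,p_m$ be the primes dividing $\prod_j a_j$, and record each $a_j$ by the vector $w_j \in \mathbb{F}_2^{m+2}$ of parities of the exponents of $-1,2,p_1,\dots,p_m$ in its factorization, i.e.\ the image of $a_j$ in $\mathbb{Q}^{\times}/(\mathbb{Q}^{\times})^2$. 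Multiplicativity of the Legendre symbol gives, for the admissible $p$, $\left(\frac{a_j}{p}\right) = (-1)^{w_j\cdot x_p}$, where $x_p \in \mathbb{F}_2^{m+2}$ is the sign pattern recording $\left(\frac{g}{p}\right)=(-1)^{(x_p)_g}$ for $g \in \{-1,2,p_1,\dots,p_m\}$.

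For the implication \emph{(2) $\Rightarrow$ (1)}: if $B$ has odd cardinality and $\prod_{j\in B} a_j$ is a perfect square, then for every admissible $p$ one has $\prod_{j\in B}\left(\frac{a_j}{p}\right) = \left(\frac{\prod_{j\in B} a_j}{p}\right) = 1$; since $\lvert B\rvert$ is odd, the value $-1$ cannot occur for every $j\in B$, so some $a_j$ with $j\in B$ is a quadratic residue, hence a square in $\mathbb{Z}_p$. Thus (1) holds, and this direction is elementary.

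For the converse I would argue by contraposition, assuming (2) fails. Appending a parity coordinate, set $\tilde w_j = (w_j,1)\in \mathbb{F}_2^{m+3}$; then a subset $B$ witnesses (2) precisely when $\sum_{j\in B}\tilde w_j = (0,\dots,0,1)$, so (2) fails exactly when $e := (0,\dots,0,1)$ lies outside $\operatorname{span}_{\mathbb{F}_2}\{\tilde w_1,\dots,\tilde w_\ell\}$. By duality over $\mathbb{F}_2$ this produces a vector $x \in \mathbb{F}_2^{m+2}$ with $w_j \cdot x = 1$ for all $j$ (the first $m+2$ coordinates of a functional killing every $\tilde w_j$ but not $e$). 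It then remains to find infinitely many primes $p$ with $x_p = x$: for any such $p$ one gets $\left(\frac{a_j}{p}\right) = (-1)^{w_j \cdot x} = -1$ for every $j$, so $A$ contains no square in $\mathbb{Z}_p$, contradicting (1).

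The crux, and the step I expect to be the main obstacle, is this last point: showing that the sign pattern $x_p$ takes every prescribed value for infinitely many $p$, i.e.\ that the quadratic characters $\left(\frac{-1}{\cdot}\right),\left(\frac{2}{\cdot}\right),\left(\frac{p_1}{\cdot}\right),\dots,\left(\frac{p_m}{\cdot}\right)$ are independent. I would deduce this from quadratic reciprocity, which expresses each symbol as a congruence condition on $p$ modulo $8p_1\cdots p_m$, together with the Chinese remainder theorem and Dirichlet's theorem on primes in arithmetic progressions; concretely, $-1,2,p_1,\dots,p_m$ are multiplicatively independent modulo squares, so $\operatorname{Gal}\bigl(\mathbb{Q}(\sqrt{-1},\sqrt{2},\sqrt{p_1},\dots,\sqrt{p_m})/\mathbb{Q}\bigr) \cong (\mathbb{Z}/2)^{m+2}$, and every element of this group is a Frobenius for infinitely many $p$, each element corresponding to one sign pattern $x$. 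Care is needed to keep the symbols genuinely independent, which is where reciprocity and the prime $2$ must be handled explicitly; once this equidistribution is in hand, the two implications close the proof.
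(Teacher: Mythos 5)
Your proposal is correct. Be aware, first, that the paper does not prove Proposition \ref{Fr} at all: it is quoted as a known result of Fried \cite{Fr}, rediscovered by Filaseta and Richman \cite{FiRi}, so there is no internal proof to compare against; what you have written is essentially the standard argument from that literature. All three of your ingredients check out: the reduction to Legendre symbols via Hensel's lemma over the cofinite set of admissible primes; the $\mathbb{F}_{2}$-duality step, in which failure of (2) correctly becomes $e=(0,\ldots,0,1)\notin\operatorname{span}_{\mathbb{F}_{2}}\{\tilde{w}_{1},\ldots,\tilde{w}_{\ell}\}$, whence a functional vanishing on the span but not at $e$ yields $x$ with $w_{j}\cdot x=1$ for every $j$; and the realization step, which is the standard independence of quadratic characters. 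For the latter, note that since $\mathbb{Q}(\sqrt{-1},\sqrt{2},\sqrt{p_{1}},\ldots,\sqrt{p_{m}})$ is abelian over $\mathbb{Q}$ and sits inside $\mathbb{Q}(\zeta_{8p_{1}\cdots p_{m}})$, Dirichlet's theorem on primes in arithmetic progressions already suffices; full Chebotarev is not needed. Your linear algebra is also precisely the $q=2$ analogue of the device the paper does use at odd primes, namely the hyperplane-covering criterion of Proposition \ref{thmFFA}: your dual vector $x$ is exactly a point of $\mathbb{F}_{2}^{m+2}$ not covered by the hyperplanes $\{x: w_{j}\cdot x=0\}$, and the sign coordinate for $-1$ together with the appended parity coordinate are the extra bookkeeping that the case $q=2$ requires. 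One slip should be repaired: you take $p_{1},\ldots,p_{m}$ to be \emph{all} primes dividing $\prod_{j}a_{j}$ and then adjoin separate coordinates for $-1$ and $2$, so if $2\mid\prod_{j}a_{j}$ the generator $2$ is listed twice, the list is multiplicatively dependent modulo squares, and ``every sign pattern occurs for infinitely many $p$'' is false as stated. Taking $p_{1},\ldots,p_{m}$ to be the \emph{odd} prime divisors (as your independence claim implicitly assumes) fixes this, and then the proof is complete.
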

A recent result of Farhangi and Magner shows that $k(n) \geq 3$ for odd $n$.

\begin{proposition}\big(see \cite[Theorem 2, pp. 17]{Sohail}\big) Let $\alpha, \beta, \gamma$ be three rational numbers and $n$ be any odd natural number greater than $1$. If none of $\alpha, \beta, \gamma$ are $n^{th}$ powers in $\mathbb{Q}$, then there exists infinitely many primes $p$ such that none of $\alpha, \beta, \gamma$ is an $n^{th}$ power modulo $p$.  
\end{proposition}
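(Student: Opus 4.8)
The plan is to translate the statement into a question about Frobenius elements and then combine Kummer theory with a short covering argument. After clearing denominators we may assume $\alpha,\beta,\gamma$ are non-zero integers. Set $K=\mathbb{Q}(\zeta_{n})$ and $L=K(\alpha^{1/n},\beta^{1/n},\gamma^{1/n})$, and write $V=\mathrm{Gal}(L/K)$, which is abelian of order dividing $n^{3}$. For a prime $p\equiv 1\pmod n$ unramified in $L$ we have $\mu_{n}\subset\mathbb{F}_{p}$, so $\alpha$ is an $n^{th}$ power modulo $p$ if and only if $x^{n}-\alpha$ splits in $\mathbb{F}_{p}$, equivalently the Frobenius of a prime of $K$ above $p$ fixes $\alpha^{1/n}$, i.e.\ it lies in the subgroup $H_{\alpha}:=\mathrm{Gal}(L/K(\alpha^{1/n}))$ of $V$; similarly for $\beta,\gamma$. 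Thus it suffices to produce an element $\sigma\in V$ lying outside $H_{\alpha}\cup H_{\beta}\cup H_{\gamma}$ and then realize it as a Frobenius for infinitely many primes.

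The first point is that each of $H_{\alpha},H_{\beta},H_{\gamma}$ is a \emph{proper} subgroup of $V$, which is exactly the assertion that $\alpha,\beta,\gamma\notin(K^{\times})^{n}$, and here the oddness of $n$ is essential. I would argue as follows: if $\alpha\in(K^{\times})^{n}$ then $x^{n}-\alpha$ has a root in $K$, and since $\mu_{n}\subset K$ it splits completely in $K$; in particular the real $n^{th}$ root $\rho$ of $\alpha$ generates a subfield $\mathbb{Q}(\rho)\subseteq K$ with $\mathbb{Q}(\rho)\subseteq\mathbb{R}$. As $K/\mathbb{Q}$ is abelian, $\mathbb{Q}(\rho)/\mathbb{Q}$ is Galois, so it contains every conjugate $\zeta_{n}^{i}\rho$ of $\rho$; these conjugates then lie in $\mathbb{R}$, and since $n$ is odd the only real $n^{th}$ root of unity is $1$. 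Hence $\rho$ has no non-trivial conjugates, $\rho\in\mathbb{Q}$, and $\alpha=\rho^{n}$ is a perfect $n^{th}$ power, contrary to hypothesis. This is precisely the place where the Grunwald–Wang exception for $8\mid n$ is avoided.

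With all three subgroups proper, the covering argument is immediate and again uses oddness: every proper subgroup of $V$ has index at least $q\ge 3$, where $q$ is the smallest prime dividing $n$, so $|H_{\alpha}|+|H_{\beta}|+|H_{\gamma}|\le|V|$. As the three subgroups all contain the identity, their union has size at most $|V|-2<|V|$, so some $\sigma\in V\setminus(H_{\alpha}\cup H_{\beta}\cup H_{\gamma})$ exists. Finally, since $L/\mathbb{Q}$ and $K/\mathbb{Q}$ are Galois, $V$ is normal in $G=\mathrm{Gal}(L/\mathbb{Q})$, and a direct check shows each $H_{\alpha},H_{\beta},H_{\gamma}$ is stable under $G$-conjugation; hence $V\setminus(H_{\alpha}\cup H_{\beta}\cup H_{\gamma})$ is a union of conjugacy classes of $G$. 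The Chebotarev density theorem then gives a positive density, in particular infinitely many, of primes $p$ whose Frobenius class lies in this set. For each such unramified $p$ we have $\sigma|_{K}=\mathrm{id}$, so $p\equiv 1\pmod n$, and $\sigma\notin H_{\alpha}\cup H_{\beta}\cup H_{\gamma}$ forces none of $\alpha,\beta,\gamma$ to be an $n^{th}$ power modulo $p$, as required.

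The main obstacle is the proper-subgroup step: the whole argument collapses unless $\alpha,\beta,\gamma$ remain non-$n^{th}$-powers after adjoining $\zeta_{n}$, and this upgrade from $\mathbb{Q}$ to $\mathbb{Q}(\zeta_{n})$ genuinely fails once $8\mid n$, so oddness must be used decisively exactly there (and again in the index bound $q\ge 3$). The remaining work is bookkeeping: verifying that the $H$'s are conjugation-stable so that Chebotarev applies to the union rather than to a single class, and discarding the finitely many primes dividing $\alpha\beta\gamma$ or ramifying in $L$.
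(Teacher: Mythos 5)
Your proof is correct, but it cannot be compared to ``the paper's own proof'' in the strict sense, because the paper does not prove this proposition at all: it is quoted from Farhangi--Magner \cite{Sohail}, and the nearest in-paper argument is the proof of Theorem \ref{mainresult} for odd $n$, which runs through Ska\l{}ba's product criterion (Proposition \ref{Skalba}) and the hyperplane-covering criterion (Propositions \ref{thmFFA} and \ref{FFA}). Your Kummer--Chebotarev argument is therefore a genuinely different route, and both of its pivotal steps are correctly executed: (i) for odd $n$, a rational that is not an $n$-th power in $\mathbb{Q}$ remains a non-$n$-th power in $K=\mathbb{Q}(\zeta_{n})$ --- your real-subfield argument (the conjugates $\zeta_{n}^{i}\rho$ of the real root $\rho$ lie in the Galois subfield $\mathbb{Q}(\rho)\subseteq\mathbb{R}$, and the only real $n$-th root of unity is $1$ when $n$ is odd) is a valid proof, and it is precisely the statement that the $8\mid n$ Grunwald--Wang pathology is absent; (ii) a finite abelian group whose order is divisible only by primes $\geq 3$ is not the union of three proper subgroups, by the count $1+\sum_{i}\big(|H_{i}|-1\big)\leq |V|-2$. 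Two smaller remarks: the conjugation-stability of $H_{\alpha}$ needs no ``direct check,'' since $K(\alpha^{1/n})$ is the splitting field of $x^{n}-\alpha$ over $\mathbb{Q}$, hence Galois over $\mathbb{Q}$, so $H_{\alpha}$ is normal in $G$; and discarding the primes dividing $\alpha\beta\gamma$ is genuinely necessary (if $p\mid\alpha$ then $\alpha\equiv 0$ is trivially an $n$-th power modulo $p$ and the Frobenius criterion says nothing), but this costs only finitely many primes against a positive-density set. As for what each approach buys: yours gives a positive density of primes rather than mere infinitude, and the same counting (at most $q$ subgroups of index at least $q$, where $q$ is the least prime factor of $n$) extends your argument verbatim from three rationals to $q$ rationals, recovering the difficult direction of Theorem \ref{mainresult} for odd $n$; the paper's combinatorial machinery, by contrast, yields an exact characterization of which sets contain an $n$-th power in $\mathbb{Z}_{p}$ for almost every $p$ (Proposition \ref{thmFFA}), which is what the optimality constructions of Section \ref{Optimality} require and which a pure covering bound cannot provide.
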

The main content of this article is the following theorem which shows that $k(n)$ is equal to the smallest prime dividing $n$, except in few outlined cases when $n$ is even. 

\begin{theorem}
Let $n = 2^{a_{0}} \cdot \prod_{i=1}^{k} p_{i}^{a_{i}}$ be a natural number with $a_{0},k \geq 0$ and $p_{1} < p_{2} < \ldots < p_{k}$ odd primes. Any finite subset $A$ of rationals with $|A| \leq \begin{cases} 2 ; \text{ when n is even } \\ p_{1}; \text{ when n is odd} \end{cases}$ contains a $n^{th}$ power in $\mathbb{Q}_{p}$ for almost every prime $p$ if and only if either $A$ contains a $n^{th}$ power or $n$ is even and $A$ has one of the following forms:
\begin{enumerate}
    \item If $a_{0} = 1$ and $n \neq 2$, then \[ A = \Big\{ \big((-1)^{\frac{p_{j}-1}{2}} \cdot p_{j}\big)^{\frac{n}{2}} \alpha_{1}^{n}, \alpha_{2}^{\frac{n}{p_{j}^{a_{j}}}}\Big\}, \text{ for some } 1 \leq j \leq k \text{ and } \alpha_{1}, \alpha_{2}\in\mathbb{Q}. \]

    \item If $a_{0} = 2$, then for some $1 \leq j \leq k$ and some $\alpha_{1}, \alpha_{2} \in\mathbb{Q}$,
    \begin{equation*}
        A \in \Bigg\{\bigg\{ -2^{n/2} \cdot \alpha_{1}^{n}, \alpha_{2}^{n/2}\bigg\}, \bigg\{ p_{j}^{n/2} \cdot \alpha_{1}^{n}, \alpha_{2}^{\bigg(\frac{n}{p_{j}^{a_{j}}}\bigg)} \bigg\}, \bigg\{p_{j}^{n/2} \cdot \alpha_{1}^{n}, -2^{\bigg(\frac{n}{2p_{j}^{a_{j}}}\bigg)} \cdot \alpha_{2}^{\bigg(\frac{n}{p_{j}^{a_{j}}}\bigg)} \bigg\} \Bigg\}
    \end{equation*}

    \item If $a_{0} \geq 3$, then one of the following holds:
    \begin{enumerate}
        \item $A$ contains an element of the form $2^{\frac{n}{2}} \cdot \alpha^{n}$ for some $\alpha\in\mathbb{Q}$.

        \item For some $1 \leq j \leq k$ and $\alpha_{1}, \alpha_{2} \in\mathbb{Q}$,
         \begin{multline*}
         A \in \Bigg\{ \bigg\{ p_{j}^{\frac{n}{2}} \cdot \alpha_{1}^{n}, \alpha_{2}^{\bigg(\frac{n}{p_{j}^{a_{j}}}\bigg)} \bigg\},  \bigg\{ p_{j}^{\frac{n}{2}} \cdot \alpha_{1}^{n}, 2^{\bigg(\frac{n}{2p_{j}^{a_{j}}}\bigg)} \cdot \alpha_{2}^{\bigg(\frac{n}{p_{j}^{a_{j}}}\bigg)} \bigg\}, \bigg\{ 2^{\frac{n}{2}} \cdot p_{j}^{\frac{n}{2}} \cdot \alpha_{1}^{n}, \alpha_{2}^{\bigg(\frac{n}{p_{j}^{a_{j}}}\bigg)} \bigg\} \\ \bigg\{ 2^{\frac{n}{2}} \cdot p_{j}^{\frac{n}{2}} \cdot \alpha_{1}^{n}, 2^{\bigg(\frac{n}{2p_{j}^{a_{j}}}\bigg)} \cdot \alpha_{2}^{\bigg(\frac{n}{p_{j}^{a_{j}}}\bigg)} \bigg\} \Bigg\}
         \end{multline*}
    \end{enumerate}
\end{enumerate}
Moreover, for every $n \geq 2$, there exists infinitely many subsets $A$ of $\mathbb{Q}^{\times}/\big(\mathbb{Q}^{\times}\big)^{p_{1}}$, with $|A| = \begin{cases} 3 ; \text{ for even } n \\ p_{1}+1 ; \text{ for odd } n\end{cases}$ such that $A$ contains $n^{th}$ power in $\mathbb{Q}_{p}$ for almost every $p$ but $A$ neither contains a $n^{th}$ power nor contains a subset from the above list.
\label{mainresult}
\end{theorem}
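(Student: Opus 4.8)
The plan is to convert the local--global condition into a finite question about power-residue symbols and then into linear algebra over the prime fields $\mathbb{F}_\ell$ for $\ell \mid n$. For a prime $p$ dividing neither $n$ nor any numerator or denominator of the $a_i$, Hensel's lemma shows that $a_i$ is an $n^{th}$ power in $\mathbb{Q}_p$ iff it is an $n^{th}$ power in $\mathbb{F}_p^\times$; since $\mathbb{F}_p^\times$ is cyclic this happens iff $a_i$ is an $\ell^{v_\ell(n)}$-th power modulo $p$ for every prime $\ell \mid n$. Hence $A$ contains an $n^{th}$ power in $\mathbb{Q}_p$ for almost every $p$ if and only if there is no ``defeating'' prime, i.e.\ no $p$ (equivalently, by Chebotarev, no Frobenius class in $\mathrm{Gal}(\mathbb{Q}(\zeta_n, A^{1/n})/\mathbb{Q})$) at which every $a_i$ simultaneously fails to be an $\ell^{v_\ell(n)}$-th power for at least one $\ell \mid n$. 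Restricting to $p \equiv 1 \pmod{\ell}$ and recording the $\ell^{th}$ power-residue symbols, Kummer theory identifies the realizable symbol-tuples $\left(\left(\frac{a_i}{p}\right)_\ell\right)_i$ with the image of $\mathrm{Hom}(V_\ell,\mu_\ell)$, where $V_\ell = \langle a_1,\dots,a_{|A|}\rangle \le \mathbb{Q}^\times/(\mathbb{Q}^\times)^\ell$.

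The engine of the argument is the elementary covering lemma: for $d \ge 2$ the space $\mathbb{F}_\ell^d$ cannot be covered by fewer than $\ell+1$ hyperplanes through the origin (pass to a $2$-dimensional subspace lying in none of the given hyperplanes, which exists for $d\ge 3$ by a dimension count and is the whole space when $d=2$, and use that covering $\mathbb{F}_\ell^2$ requires all $\ell+1$ of its lines through the origin). In the language above, $A$ can be defeated at $\ell$ precisely when the proper hyperplanes $a_i^{\perp}\subseteq V_\ell^{*}$ fail to cover $V_\ell^{*}$, which by the lemma is automatic as soon as at most $\ell$ of the $a_i$ are not already global $\ell^{th}$ powers. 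Since $p_1$ is the smallest prime dividing $n$, every prime $\ell \mid n$ has ``capacity'' $\ell \ge p_1 \ge |A|$, so with $|A| \le p_1$ one may assign each $a_i$ to a prime $\ell \mid n$ at which it is defeatable (refining to the module $\mathbb{Z}/\ell^{v_\ell(n)}$, where the same covering principle still leaves room) and realize all these defeats simultaneously. For odd $n$ the Kummer--cyclotomic extensions attached to distinct primes $\ell \mid n$ are independent and, by the injectivity part of Proposition \ref{GrunWang}, no extra relations appear; the assignment then succeeds unless some $a_i$ is a perfect $n^{th}$ power, giving the odd case with no exceptions.

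The even case is where the difficulty lies and is the main obstacle. Two phenomena break the clean independence used above, both concentrated at $\ell = 2$. First, the Grunwald--Wang exception: when $8 \mid n$ the natural map $\mathbb{Q}^\times/(\mathbb{Q}^\times)^n \to \prod_p \mathbb{Q}_p^\times/(\mathbb{Q}_p^\times)^n$ has nontrivial kernel generated by $2^{n/2}$, so an element of the form $2^{n/2}\alpha^n$ is locally an $n^{th}$ power everywhere without being one globally. Second, quadratic reciprocity couples the quadratic-residue symbol $\left(\frac{a_i}{p}\right)_2$ to the cyclotomic conditions $p \bmod p_j^{a_j}$ at the odd primes: since $\left(\frac{(-1)^{(p_j-1)/2}p_j}{p}\right) = \left(\frac{p}{p_j}\right)$, an element $\big((-1)^{(p_j-1)/2}p_j\big)^{n/2}\alpha_1^n$ is an $n^{th}$ power in $\mathbb{Q}_p$ exactly when $p$ is a square modulo $p_j$, and this can be complemented by a second element that is an $n^{th}$ power when $p$ is a non-square modulo $p_j$, yielding a defeat-free pair. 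I would therefore isolate these couplings and rerun the assignment/covering argument with the $\ell=2$ freedom constrained accordingly. The heart of the proof is the bookkeeping: splitting on $a_0 = v_2(n) \in \{1,2,\ge 3\}$, enumerating exactly which reciprocity-compatible pairs cannot be defeated, verifying (via the motivating reciprocity computation) that each listed form does contain an $n^{th}$ power in $\mathbb{Q}_p$ for almost every $p$, and showing by the covering lemma applied prime-by-prime that no other pair does. Completeness of the list---that the enumerated families exhaust all exceptions---is the delicate step.

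For optimality I would exhibit, for every $n$, an explicit family of cardinality $q+1$ realizing the covering bound. For odd $n$ take multiplicatively independent rationals $a,b$ with none of $a,b,ab,\dots,ab^{p_1-1}$ a perfect $n^{th}$ power, and set $A = \{a,b,ab,ab^2,\dots,ab^{p_1-1}\}$; the classes $a_i^{\perp}$ then run over all $p_1+1$ lines of the $2$-dimensional space $V_{p_1}^{*}$, which they cover, so $A$ contains an $n^{th}$ power in $\mathbb{Q}_p$ for almost every $p$ while containing no $n^{th}$ power and no subset from the list. For even $n$ (so $q=2$) the analogous triple $\{p_1',p_2',p_1'p_2'\}$ built from two independent classes does the same, exactly as in the motivating example $B$. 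Letting $a,b$ (respectively the two classes) range over infinitely many independent choices produces infinitely many such sets, proving the bound $q$ optimal for every $n \ge 2$.
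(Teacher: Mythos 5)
Your proposal correctly isolates the three key ingredients (hyperplane covering, cross-prime independence, and the exceptional couplings at $\ell=2$), but it has genuine gaps at exactly the two places where the paper leans on an outside result or an explicit computation.

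First, the even case is not proved. For even $n$ the content of Theorem \ref{mainresult} \emph{is} the precise list of exceptional two-element sets in items (1)--(3), and your proposal defers exactly this: after describing the $8\mid n$ kernel and the reciprocity coupling, you state that ``enumerating exactly which reciprocity-compatible pairs cannot be defeated'' and the ``completeness of the list'' are the delicate steps, and you never carry them out. That classification is not routine bookkeeping; it is a theorem of Schinzel and Ska\l{}ba \cite[Corollary 4]{SS}, which the paper simply quotes as Proposition \ref{even-hammer} and from which the even half of Theorem \ref{mainresult} follows at once. As written, your proposal establishes only the easy direction (that the listed pairs do work), not the classification.

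Second, your optimality construction fails as stated. Covering all $p_1+1$ lines of $V_{p_1}^{*}$ shows only that $A=\{a,b,ab,\dots,ab^{p_1-1}\}$ contains a $p_1$-th power in $\mathbb{Q}_p$ for almost every $p$; it does not produce an $n$-th power. Take $n=15$, $a=2$, $b=3$, so $A=\{2,3,6,18\}$: the exponent vectors $(1,0),(0,1),(1,1),(1,2)$ give all four lines of $\mathbb{F}_3^2$ but only four of the six lines of $\mathbb{F}_5^2$, so by Proposition \ref{thmFFA} there is a positive density of primes $p$ for which no element of $A$ is a fifth power modulo $p$, hence none is a $15$-th power, even though one of them is always a cube. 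The fix is what the paper does (and what your own example $B^{(3)}$ does for $n=6$): raise every element to the power $n/p_1 = p_1^{a_1-1}\prod_{i\geq 2}p_i^{a_i}$, which converts ``$p_1$-th power mod $p$'' into ``$n$-th power mod $p$''. The same correction is needed for your even triple, and one must also check that the corrected sets contain no pair from the exceptional list, which you do not address.

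Finally, on odd $n$ your route is genuinely different from the paper's. The paper avoids any independence statement across the primes dividing $n$: it reduces to a single prime power via Skalba's criterion (Proposition \ref{Skalba}) together with Lemmas \ref{qfree}, \ref{primepower} and \ref{exponentiation2}, and then applies the covering bound of Proposition \ref{FFA}. You instead assign each element to a prime $\ell\mid n$ at which it is not a perfect $\ell^{v_\ell(n)}$-th power and defeat all assignments simultaneously; this is a viable and in some ways more transparent plan---indeed it is exactly what configurations such as $\{x^3,y^5\}$ for $n=15$ demand, since there every prime dividing $n$ sees a perfect power in $A$ and no single-prime defeat is possible---but the two claims carrying it are asserted rather than proved: (i) the covering bound over $\mathbb{Z}/\ell^{v_\ell(n)}$ rather than over $\mathbb{F}_\ell$ (your ``still leaves room''; this is precisely what Lemmas \ref{qfree} and \ref{primepower} supply), and (ii) the linear disjointness of the Kummer--cyclotomic extensions at distinct odd primes $\ell\mid n$, for which the appeal to the injectivity in Proposition \ref{GrunWang} is not a proof. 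Both statements can be proved, but as it stands the odd case, like the even case, is a program rather than a proof.
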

The Grunwald-Wang theorem for $n^{th}$ powers can be recovered by taking $|A| = 1$ in Theorem \ref{mainresult}. Just as Proposition \ref{GrunWang} is equivalent to the injectivity of the map \ref{injective}, our main result has a similar formulation. Let $n$ be a natural number greater than $1$, $q$ be the smallest prime dividing $n$ and $S$ be a finite subset of primes. Consider the map \begin{equation}
\Big(\mathbb{Q}^{\times}/\big(\mathbb{Q}^{\times}\big)^{n}\Big)^{q} \longrightarrow \prod_{p \notin S} \Bigg(\Big(\mathbb{Q}_{p}^{\times}/\big(\mathbb{Q}_{p}^{\times}\big)^{n}\Big)\Bigg)^{q} \label{map1},
\end{equation}
induced on each of the $q$ components by the natural map \eqref{injective}. Consider the subset \[\mathcal{S} := \Big\{ \big((a_{1p}, a_{2p}, \ldots, a_{qp})\big)_{p\notin S}: \text{ for every } p \notin S, \exists\hspace{1mm} 1 \leq j \leq q \text{ such that } a_{jp} = 1 \Big\}\] of \[\prod_{p \notin S} \Bigg(\Big(\mathbb{Q}_{p}^{\times}/\big(\mathbb{Q}_{p}^{\times}\big)^{n}\Big)\Bigg)^{q}\] and for every $1 \leq j \leq q$, define \[\mathcal{T}:= \Big\{(a_{1}, a_{2}, \ldots, a_{q}) : a_{j} = 1 \text{ for some } 1 \leq j \leq q\Big\}\] of $\Big(\mathbb{Q}^{\times}/\big(\mathbb{Q}^{\times}\big)^{n}\Big)^{q}$. Theorem \ref{mainresult} states that the pre-image of the set $\mathcal{S}$ under the map \eqref{map1} is exactly equal to $ \mathcal{T}$, except in some finitely many cases when $n$ is even. Furthermore, if one replaces $q$ by a larger natural number, preimage of $\mathcal{S}$ is always larger than $\mathcal{T}$ for every $n$. Theorem \ref{mainresult} when $n$ is an odd prime was first established in \cite{MishraFFA} (see Proposition \ref{FFA} below). 

This article has four more sections. The next section is used to collect some known results in preparation for the proof of Theorem \ref{mainresult}. Section \ref{oddn} is entirely dedicated to proving Theorem \ref{mainresult} for odd $n$, and the short Section \ref{evensection} does the same when $n$ is even. The last section shows that the upper bound $q$ in Theorem \ref{mainresult} is optimal for every $n$.

\begin{remark}
    In the statement of Theorem \ref{mainresult}, without loss of generality, we can assume that $A$ is a subset of integers that contains a $n^{th}$ power in $\mathbb{Z}_{p}$ for almost every $p$. This is because if \[A := \Big\{\frac{a_{1}}{b_{1}}, \frac{a_{2}}{b_{2}}, \ldots, \frac{a_{k}}{b_{k}}\Big\}\] is a subset of rationals that contain a $n^{th}$ power in $\mathbb{Q}_{p}$ for almost every prime $p$, then \[A^{\prime} := \Bigg\{ \frac{a_{1}}{b_{1}} \big(b_{1}b_{2} \cdots b_{k}\big)^{n}, \frac{a_{2}}{b_{2}} \big(b_{1}b_{2} \cdots b_{k}\big)^{n}, \ldots, \frac{a_{k}}{b_{k}} \big(b_{1}b_{2} \cdots b_{k}\big)^{n}\Bigg\}\] also does the same in $\mathbb{Z}_{p}$ for almost every prime $p$. Therefore, we will assume without loss of generality that $A$ is a finite subset of integers containing a $n^{th}$ power in $\mathbb{Z}_{p}$ for almost every prime $p$ - which is equivalent to the fact that $A$ contains a $n^{th}$ power modulo $p$, for almost every prime $p$.
\end{remark}

\section{Some Preliminary Results}
Schinzel and  Ska\l{}ba, in \cite{SS}, obtained a necessary and sufficient condition for a finite subset of a number field $K$ to contain a $n^{th}$ power in $K_{p}$ for almost every prime $p$ of $K$. This condition was further refined by Ska\l{}ba when $n$ is a power of an odd prime. We will state and use this refined result, albeit only in the context of rationals. 
\begin{proposition}\big(see \cite[Theorem 2, pp. 68]{Sk}\big)
Let $q$ be an odd prime, $m \geq 1$ and $A = \{a_{j}\}_{j=1}^{\ell}$ be a finite subset of non-zero rationals. Then, the following two conditions are equivalent:
\begin{enumerate}
    \item $A$ contains a $q^{m}$-th power in $\mathbb{Q}_{p}$ for almost every prime $p$. 

    \item For every $(c_{j})_{j=1}^{\ell}\in\mathbb{Z}^{\ell}$, there exists subsets $B, C  \subset\{1, 2, \ldots, \ell\}$ with $|B| \not\equiv |C| \hspace{1mm} (\text{mod } q)$ such that \[\frac{\prod_{j \in B} a_{j}^{c_{j}}}{\prod_{j\in C} a_{j}^{c_{j}}} \text{ is equal to a perfect } q^{m}-\mathrm{th} \text{ power in rationals}.\]
\end{enumerate}
\label{Skalba}
\end{proposition}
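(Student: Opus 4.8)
The plan is to translate both conditions into linear algebra over $\mathbb{Z}/q^{m}\mathbb{Z}$ via Kummer theory and the Chebotarev density theorem, and then to exploit the self-duality of the standard pairing on $(\mathbb{Z}/q^{m}\mathbb{Z})^{\ell}$. First I would reduce to the case where the $a_{j}$ are nonzero integers (clearing denominators, or multiplying by $q^{m}$-th powers, alters neither condition) and use Hensel's lemma to replace ``$q^{m}$-th power in $\mathbb{Q}_{p}$'' by ``$q^{m}$-th power in $\mathbb{F}_{p}^{\times}$'' for the primes $p \nmid q\prod_{j} a_{j}$. Put $F=\mathbb{Q}(\zeta_{q^{m}})$, $L=F(a_{1}^{1/q^{m}},\dots,a_{\ell}^{1/q^{m}})$ and $G=\mathrm{Gal}(L/F)$. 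Kummer theory supplies a perfect pairing $G\times\Delta\to\mu_{q^{m}}\cong\mathbb{Z}/q^{m}\mathbb{Z}$, where $\Delta=\langle a_{1},\dots,a_{\ell}\rangle(F^{\times})^{q^{m}}/(F^{\times})^{q^{m}}$, under which the coordinate maps $\pi_{j}(\sigma)=\sigma(a_{j}^{1/q^{m}})/a_{j}^{1/q^{m}}$ detect exactly whether $a_{j}$ is a $q^{m}$-th power modulo $p$ when $\mathrm{Frob}_{p}=\sigma$. Because $q$ is odd one has $\mathbb{Q}^{\times}\cap(F^{\times})^{q^{m}}=(\mathbb{Q}^{\times})^{q^{m}}$, so it is legitimate to identify $G$ with the orthogonal complement $R^{\perp}\subseteq(\mathbb{Z}/q^{m}\mathbb{Z})^{\ell}$ of the relation module $R=\{(d_{j})\ :\ \prod_{j}a_{j}^{d_{j}}\in(\mathbb{Q}^{\times})^{q^{m}}\}$. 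A standard Chebotarev argument (the case $p\not\equiv 1\ (\mathrm{mod}\ q)$ being automatic, and the strata $p\equiv 1\ (\mathrm{mod}\ q^{v})$ being handled by reduction modulo $q^{v}$) then shows that $(1)$ is equivalent to the covering statement that every $x\in R^{\perp}$ has at least one zero coordinate, i.e. $R^{\perp}=\bigcup_{j=1}^{\ell}\ker\pi_{j}$.

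The implication $(2)\Rightarrow(1)$ is then clean, and I would prove its contrapositive. If the covering fails, choose $x\in R^{\perp}$ with every coordinate nonzero and write $x_{j}=q^{s_{j}}u_{j}$ with $u_{j}$ a unit and $0\le s_{j}\le m-1$. Taking $c_{j}=q^{m-1-s_{j}}\widetilde{u_{j}}$, where $\widetilde{u_{j}}$ lifts $u_{j}^{-1}$, forces $c_{j}x_{j}\equiv q^{m-1}\pmod{q^{m}}$ for every $j$. Now for any $e\in\{0,\pm1\}^{\ell}$ encoding subsets $B,C$ with $(e_{j}c_{j})\in R$, orthogonality $x\perp R$ gives $0=\sum_{j}e_{j}(c_{j}x_{j})=q^{m-1}\sum_{j}e_{j}$ in $\mathbb{Z}/q^{m}\mathbb{Z}$, whence $|B|-|C|=\sum_{j}e_{j}\equiv 0\pmod{q}$; thus $(2)$ fails at this $c$. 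The valuation-dependent weights are precisely what allow the argument to reach the ``deep'' coordinates with $s_{j}\ge 1$, which a naïve reduction modulo $q$ would miss.

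For $(1)\Rightarrow(2)$ I would again argue contrapositively: from a witness $c$ for which every $e\in\{0,\pm1\}^{\ell}$ with $(e_{j}c_{j})\in R$ satisfies $\sum_{j}e_{j}\equiv 0\pmod{q}$, I must manufacture a full-support element of $R^{\perp}$. Note first that $(2)$ holds trivially for any $c$ with a zero entry (take $B$ a singleton at that index), so one may assume all $c_{j}\ne 0$. Were the restriction $e\in\{0,\pm1\}^{\ell}$ dropped, the witness condition would read $\sum_{j}e_{j}\equiv 0\pmod q$ for all $e\in R_{c}:=\{e:(e_{j}c_{j})\in R\}=(D_{c}(R^{\perp}))^{\perp}$, where $D_{c}$ is coordinatewise multiplication by $c$; running the duality of the previous paragraph in reverse (over $\mathbb{F}_{q}$, using $\mathbf{1}\in (R_{c})^{\perp}=D_{c}(R^{\perp})$) would then produce $y\in R^{\perp}$ with $c_{j}y_{j}=1$, i.e. a full-support $y$. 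This settles the case $m=1$ up to the sign restriction.

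The genuine obstacle, which I expect to be the crux of the whole proposition, is twofold: the gap between arbitrary elements of $R_{c}$ and the $\{0,\pm1\}$-vectors that actually correspond to subset pairs $(B,C)$ (the functional $e\mapsto\sum_{j}e_{j}$ can vanish on all $\{0,\pm1\}$-vectors of $R_{c}$ without vanishing on $R_{c}$), together with the failure of reduction modulo $q$ to commute with taking orthogonal complements when $m>1$. I would attack both by induction on $m$, peeling off the layer $q^{m-1}(\mathbb{Z}/q^{m}\mathbb{Z})$ to reduce to the residue field, combined with a matroid/circuit analysis over $\mathbb{F}_{q}$ of those subspaces $R^{\perp}$ that possess no full-support vector: one selects the support and the signs of $e$ along a suitable circuit of $R$ so that the signed cardinality $\sum_{j}e_{j}$ is forced to be nonzero modulo $q$. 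Carrying out this combinatorial extraction cleanly, and verifying that the layer-by-layer choices assemble into a single $\{0,\pm1\}$-relation, is where the real work lies.
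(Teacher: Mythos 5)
This proposition is not proved in the paper at all: it is imported verbatim from Ska\l{}ba \cite[Theorem 2]{Sk} and used as a black box, so your attempt cannot be compared against an internal argument, only against the statement itself. Within your proposal, the setup and one direction are in good shape. The Kummer--Chebotarev dictionary (using that $\mathbb{Q}^{\times}\cap(F^{\times})^{q^{m}}=(\mathbb{Q}^{\times})^{q^{m}}$ for $F=\mathbb{Q}(\zeta_{q^{m}})$ and odd $q$) does convert condition $(1)$ into the covering statement that every $x\in R^{\perp}$ has a zero coordinate; the stratum of primes with $q^{v}$ exactly dividing $p-1$, $v<m$, is genuinely routine, since a full-support vector $y$ at level $v$ lifts to the full-support vector $q^{m-v}\tilde{y}\in R^{\perp}$ at level $m$. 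Your derivation of $\neg(2)$ from a full-support $x\in R^{\perp}$, via the valuation-weighted exponents $c_{j}=q^{m-1-s_{j}}\widetilde{u_{j}}$ and the identity $q^{m-1}\textstyle\sum_{j}e_{j}=\sum_{j}e_{j}c_{j}x_{j}=0$ in $\mathbb{Z}/q^{m}\mathbb{Z}$, is correct and complete, so the implication $(2)\Rightarrow(1)$ is essentially done.

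The genuine gap is the direction $(1)\Rightarrow(2)$, and you name it yourself. What must be shown is that if every $x\in R^{\perp}$ has a zero coordinate, then for every witness $c$ there is a relation vector with entries in $\{0,\pm1\}$ (a subset pair $B,C$) whose coordinate sum is nonzero mod $q$ --- not merely a relation with arbitrary integer entries, which is all your duality computation ($q^{m-1}\mathbf{1}\in R_{c}^{\perp}$ forces a full-support element of $R^{\perp}$) can deliver. For $q=2$ the restriction is vacuous because $\{0,1\}^{\ell}=\mathbb{F}_{2}^{\ell}$, which is exactly why Fried's theorem (Proposition \ref{Fr}) is elementary; for odd $q$, bridging the gap between unrestricted relations and $\{0,\pm1\}$-relations is the entire content of Ska\l{}ba's theorem, and your proposal offers for it only a programme (induction on $m$ plus an unspecified matroid/circuit extraction) while conceding that this is ``where the real work lies.'' A proof whose self-identified crux is deferred is not a proof. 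Two smaller unaddressed points: for the witness $c$ you may only assume $q^{m}\nmid c_{j}$, not that $c_{j}$ is prime to $q$, so $D_{c}$ need not be invertible mod $q^{m}$ and the identity $R_{c}^{\perp}=D_{c}(R^{\perp})$ itself requires justification; and realizing every element of $R^{\perp}$ as a Frobenius over $\mathbb{Q}$ needs the remark that conjugation in $\mathrm{Gal}(L/\mathbb{Q})$ only rescales Kummer coordinates by units, which preserves full support.
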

We note that the subsets $B$ or $C$ in the Proposition \ref{Skalba} above are allowed to be empty, in which case, the product of $a_{j}^{c_{j}}$ over the empty set is taken to be $1$.

We will also need a characterization of subsets $A$ which contain a fixed prime power $q$ in $\mathbb{Z}_{p}$, for almost every prime $p$. This characterization should ideally allow one to construct and recognize such sets $A$ with relative ease. Before stating such a characterization, we need to associate every finite subset of $q$-free integers with a collection of linear hyperplanes in some vector space over $\mathbb{F}_{q}$.

\subsection{Linear Hyperplanes Associated to a Finite Subset}
Let $q$ be an odd prime and $A = \{a_{j}\}_{j=1}^{\ell}$ be a finite subset of $q$-free integers, neither containing $1$ nor $-1$. Let $p_{1}, p_{2}, \ldots, p_{s}$ be all the primes dividing $\prod_{j=1}^{\ell} a_{j}$. Furthermore, for every $1 \leq i \leq s$ and $1 \leq j \leq \ell$, let $\mu_{ij} \geq 0$ be such that $p_{i}^{\mu_{ij}} \mid\mid a_{j}$. Define 
\[\mathcal{H}_{j} := \Big\{ (x_{i})_{i=1}^{s} \in\big(\mathbb{F}_{q}\big)^{s} : \sum_{i=1}^{s} \mu_{ij} x_{i} = 0 \Big\}.\] Then, the set $\big\{\mathcal{H}_{j}\big\}_{j=1}^{\ell}$ is said to be the set of hyperplanes (in $\mathbb{F}_{q}^{s})$ associated to the set $A$ of integers. The author proved the following equivalence in \cite{MishraFFA}.

\begin{proposition}(see \cite[Theorem 1]{MishraFFA})
Let $q$ be an odd prime and $A = \{a_{j}\}_{j=1}^{\ell}$ be a finite subset of $q$-free integers, neither containing $1$ nor $-1$. Then, the following two conditions are equivalent:
\begin{enumerate}
    \item The set $A$ contains a $q^{th}$ power in $\mathbb{Z}_{p}$ for almost every prime $p$. 

    \item The set of hyperplanes \[\Bigg\{ \Big\{ (x_{i})_{i=1}^{s} \in\big(\mathbb{F}_{q}\big)^{s} : \sum_{i=1}^{s} \mu_{ij} x_{i} = 0 \Big\} :  1 \leq j \leq \ell \Bigg\} \] covers $\mathbb{F}_{q}^{s}$, i.e., every $(y_{i})_{i=1}^{s}\in\mathbb{F}_{q}^{s}$ satisfies $\sum_{i=1}^{s} \mu_{ij} y_{i} = 0$ in $\mathbb{F}_{q}$ for some $1 \leq j \leq \ell$. 
\end{enumerate}
\label{thmFFA}
\end{proposition}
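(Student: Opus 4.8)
The plan is to reduce the statement to a question about the values of $q$th power residue symbols and then settle that question with the Chebotarev density theorem applied to a Kummer extension. Throughout I discard the finitely many primes $p$ dividing $q\prod_j a_j$; for every other $p$ each $a_j$ is a unit in $\mathbb{Z}_p$, and since $p\neq q$, Hensel's lemma shows $a_j$ is a $q$th power in $\mathbb{Z}_p$ if and only if it is a $q$th power modulo $p$. If $p\not\equiv 1 \pmod q$ then $\gcd(q,p-1)=1$, so every unit modulo $p$ is a $q$th power and $A$ trivially contains one; hence only the primes $p\equiv 1\pmod q$ are relevant. Note also that the hypotheses that each $a_j$ is $q$-free and distinct from $\pm 1$ guarantee $0\leq \mu_{ij}\leq q-1$ and that each $\mathcal{H}_j$ is a genuine hyperplane.

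For such a $p$, fix a surjective homomorphism $\psi_p\colon \mathbb{F}_p^\times \to \mathbb{Z}/q\mathbb{Z}$ with kernel the $q$th powers (the $q$th power residue symbol, well defined up to a scalar in $\mathbb{F}_q^\times$). Because $q$ is odd we have $-1=(-1)^q$, so $\psi_p(-1)=0$ and therefore $\psi_p(a_j)=\sum_{i=1}^s \mu_{ij}\,\psi_p(p_i)$. Writing $y_i:=\psi_p(p_i)$, the element $a_j$ is a $q$th power modulo $p$ exactly when $\sum_i \mu_{ij} y_i = 0$, i.e. when $(y_i)_i\in\mathcal{H}_j$. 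Consequently $A$ contains a $q$th power modulo $p$ if and only if the vector $(\psi_p(p_i))_{i=1}^s\in\mathbb{F}_q^s$ lies in $\bigcup_j \mathcal{H}_j$; this is independent of the scalar ambiguity in $\psi_p$ because each $\mathcal{H}_j$ is homogeneous. This already yields the direction $(2)\Rightarrow(1)$: if the hyperplanes cover $\mathbb{F}_q^s$, then for every relevant $p$ the residue vector lies in some $\mathcal{H}_j$, so $A$ contains a $q$th power modulo $p$, for almost all $p$.

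For the reverse direction I argue by contrapositive: assuming the hyperplanes do not cover $\mathbb{F}_q^s$, I choose a nonzero vector $(y_i)_i\notin\bigcup_j \mathcal{H}_j$ (nonzero since every $\mathcal{H}_j$ contains the origin), and aim to realize it, up to an overall scalar, as the residue vector of infinitely many primes. Set $K=\mathbb{Q}(\zeta_q)$ and $L=K\big(p_1^{1/q},\ldots,p_s^{1/q}\big)$. A valuation argument shows the distinct rational primes $p_1,\ldots,p_s$ are multiplicatively independent in $K^\times/(K^\times)^q$: for $p_i\neq q$ the prime $p_i$ is unramified in $K$, so reading a relation $\prod_i p_i^{e_i}=\alpha^q$ at a prime above $p_i$ forces $q\mid e_i$, while the ramified prime $q$ is handled by reading the relation at the prime above $1-\zeta_q$. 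Hence Kummer theory gives $\mathrm{Gal}(L/K)\cong\mathbb{F}_q^s$, where $\tau$ corresponds to $(t_i)_i$ via $\tau(p_i^{1/q})=\zeta_q^{t_i}p_i^{1/q}$, and for a degree-one prime $\mathfrak{p}$ of $K$ above $p\equiv 1\pmod q$ the Frobenius $\mathrm{Frob}_{\mathfrak{p}}$ corresponds precisely to $(\psi_p(p_i))_i$.

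I would then invoke the Chebotarev density theorem for $L/\mathbb{Q}$ with the conjugacy class of the element $(y_i)_i\in\mathrm{Gal}(L/K)\subseteq\mathrm{Gal}(L/\mathbb{Q})$: any unramified prime $p$ with $\mathrm{Frob}_p$ in this class splits completely in $K$ (so $p\equiv 1\pmod q$) and has residue vector $(\psi_p(p_i))_i$ lying in that class, and there are infinitely many such $p$. The point needing care, and really the crux, is that $\mathrm{Gal}(L/\mathbb{Q})$ is non-abelian: conjugation by a lift of $c\in(\mathbb{Z}/q)^\times=\mathrm{Gal}(K/\mathbb{Q})$ scales the $\mathrm{Gal}(L/K)$-part by $c$, so Chebotarev recovers $(y_i)_i$ only up to a nonzero scalar. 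This is exactly compensated by the homogeneity of each $\mathcal{H}_j$: any nonzero scalar multiple of $(y_i)_i$ still avoids $\bigcup_j \mathcal{H}_j$. Thus for infinitely many primes $p$ the residue vector avoids every $\mathcal{H}_j$, so $A$ contains no $q$th power modulo such $p$, contradicting $(1)$. The main obstacle is therefore this Kummer-theoretic bookkeeping: establishing the independence of the $p_i$ over $\mathbb{Q}(\zeta_q)$ and correctly matching Frobenius with the residue symbol while tracking the harmless scalar ambiguity.
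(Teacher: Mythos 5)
Your proposal is correct, but a point of order first: the present paper does not prove this proposition at all --- it is quoted as a preliminary result, imported from the author's earlier work \cite{MishraFFA}, so there is no in-paper proof to compare yours against. On its own merits, your argument is sound and is the natural one: reduce to primes $p \equiv 1 \pmod{q}$ (Hensel's lemma plus the fact that every unit is a $q$th power modulo $p$ when $\gcd(q,p-1)=1$), translate ``$a_{j}$ is a $q$th power modulo $p$'' into the vanishing of $\sum_{i}\mu_{ij}\psi_{p}(p_{i})$, and then use Chebotarev for $L=\mathbb{Q}\big(\zeta_{q},p_{1}^{1/q},\ldots,p_{s}^{1/q}\big)$ to realize a vector avoiding all the hyperplanes as a Frobenius datum. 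You also correctly dispose of the three points where this argument can genuinely fail: the sign of $a_{j}$ is irrelevant because $-1$ is a $q$th power for odd $q$; the prime $q$ itself may occur among the $p_{i}$, and your valuation at the prime above $1-\zeta_{q}$ (where $v(q)=q-1$ is coprime to $q$) saves the multiplicative independence needed for $\mathrm{Gal}(L/K)\cong\mathbb{F}_{q}^{s}$; and $\mathrm{Gal}(L/\mathbb{Q})$ is only a semidirect product, so the Frobenius class determines the residue vector only up to a scalar in $\mathbb{F}_{q}^{\times}$, an ambiguity exactly absorbed by the homogeneity of the hyperplanes $\mathcal{H}_{j}$. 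The only caveat is that several steps (the Frobenius--residue-symbol dictionary, the conjugation computation showing the class of $(y_{i})_{i}$ is its scalar orbit) are asserted rather than carried out, but each is standard and your sketch indicates the correct reason in every case.
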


A consequence of the above Proposition is the following result that $k(q) \geq q$. 
\begin{proposition}\big(See {\cite[Corollary 1]{MishraFFA}}\big) Let $q$ be an odd prime and let $A$ be a subset of integers not containing any perfect $q^{th}$ power but containing a $q^{th}$ power in $\mathbb{Q}_{p}$ for almost every prime $p$. Then $|A| \geq q+1$. \label{FFA}
\end{proposition}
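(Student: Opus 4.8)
The plan is to deduce the statement from the hyperplane-covering criterion of Proposition~\ref{thmFFA} together with an elementary count of the points lying on a union of linear hyperplanes over $\mathbb{F}_q$. I would argue the contrapositive: assuming $|A|\le q$ and that $A$ contains a $q^{th}$ power in $\mathbb{Q}_p$ for almost every prime $p$, I would show that $A$ must contain a perfect $q^{th}$ power.

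First I would normalize $A$. Writing each nonzero $a_j = d_j c_j^{q}$ with $d_j$ its $q$-free part and $c_j \in \mathbb{Z}$, the element $a_j$ is a $q^{th}$ power in $\mathbb{Q}_p$ exactly when $d_j$ is, so $A$ contains a $q^{th}$ power in $\mathbb{Q}_p$ if and only if the set $A' := \{d_j\}$ does; moreover for almost every $p$ (namely those with $p \nmid \prod_j a_j$) being a $q^{th}$ power in $\mathbb{Z}_p$ and in $\mathbb{Q}_p$ coincide, since each $d_j$ is then a $p$-adic unit. Because $q$ is odd, both $1=1^{q}$ and $-1=(-1)^{q}$ are perfect $q^{th}$ powers, and for odd $q$ the $q$-free part $d_j$ lies in $\{1,-1\}$ precisely when $a_j$ is a perfect $q^{th}$ power. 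Hence, under the assumption that $A$ contains no perfect $q^{th}$ power, $A'$ is a finite set of $q$-free integers with $|A'|\le|A|\le q$ that contains neither $1$ nor $-1$ and still contains a $q^{th}$ power in $\mathbb{Z}_p$ for almost every $p$. This is exactly the hypothesis needed to invoke Proposition~\ref{thmFFA}, and it is the one place where the ``no perfect $q^{th}$ power'' assumption enters.

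Next I would apply Proposition~\ref{thmFFA} to $A'$. Let $p_1,\dots,p_s$ be the primes dividing $\prod_j d_j$ and let $\mu_{ij}$ be the exponent of $p_i$ in $d_j$. Because each $d_j$ is $q$-free and different from $\pm1$, at least one $\mu_{ij}$ is nonzero and lies in $\{1,\dots,q-1\}$, so the linear form $\sum_i \mu_{ij}x_i$ is not identically zero modulo $q$; hence each associated $\mathcal{H}_j$ is a genuine hyperplane of $\mathbb{F}_q^{s}$, of cardinality $q^{s-1}$. Proposition~\ref{thmFFA} then asserts that these (at most $|A'|\le q$) hyperplanes cover $\mathbb{F}_q^{s}$.

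Finally I would run the counting argument producing the contradiction. Every linear hyperplane $\mathcal{H}_j$ passes through the origin, so the union of $m$ of them contains at most $1 + m\,(q^{s-1}-1)$ points. If $m\le q$, this is at most $1+q(q^{s-1}-1)=q^{s}-q+1<q^{s}$, so the union cannot equal $\mathbb{F}_q^{s}$; equivalently, a covering forces $m \ge (q^{s}-1)/(q^{s-1}-1) > q$, that is $m\ge q+1$. Since the number of distinct hyperplanes is at most $|A'|\le |A|\le q$, this contradicts the covering obtained above, and therefore $|A|\ge q+1$. I do not expect a serious obstacle here: the only steps requiring care are the bookkeeping in the reduction to $q$-free parts (in particular the clean role of signs and of $\pm1$ when $q$ is odd) and the verification of the inequality $(q^{s}-1)/(q^{s-1}-1) > q$ for every $s\ge 2$, which reduces immediately to $q>1$.
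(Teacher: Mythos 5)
Your proof is correct. Note that the paper itself does not prove Proposition \ref{FFA}: it is imported verbatim from \cite[Corollary 1]{MishraFFA}, so there is no internal argument to compare against; what you have produced is a self-contained derivation from Proposition \ref{thmFFA}, which is the natural route (in the cited reference the statement is likewise a corollary of that covering criterion). All three of your steps are sound: the reduction to $q$-free parts correctly exploits that $q$ is odd (signs can be absorbed into $q$-th powers, so $d_j \in \{1,-1\}$ exactly when $a_j$ is a perfect $q$-th power, and the hypothesis of containing no perfect $q$-th power is precisely what guarantees $\pm 1 \notin A'$); the passage between being a $q$-th power in $\mathbb{Q}_p$ and in $\mathbb{Z}_p$ for $p$-adic units is justified; and the count $1 + m\,(q^{s-1}-1) \le q^{s}-q+1 < q^{s}$ for $m \le q$ is the standard fact that one needs at least $q+1$ linear hyperplanes to cover $\mathbb{F}_q^{s}$. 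One presentational slip: the ratio $(q^{s}-1)/(q^{s-1}-1)$ you propose to check is undefined when $s=1$ (division by zero), and $s=1$ can occur (e.g.\ $A'$ a single prime power); but your additive bound handles $s=1$ perfectly well, since each hyperplane is then just the origin, so you should state the conclusion via the additive count rather than the ratio.
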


\section{When $n$ is odd}\label{oddn}
Let $q$ be an odd prime. Assume that a subset $A$ of integers contains a $q^{m}$-th power in $\mathbb{Z}_{p}$ for almost every prime $p$, without containing a perfect $q^{m}$-th power. Proposition \ref{FFA} cannot be directly used to conclude that the cardinality of $A$ is at least $q+1$. This is because $A$ may still contain perfect $q^{r}$-th powers, for some $r < m$. The next lemma upgrades Proposition \ref{FFA} so that we can obtain this conclusion.

\begin{lemma}
Let $q$ be an odd prime, $m \geq 2$ and $C = \{a_{1}^{q^{\mu_{1}}}, a_{2}^{q^{\mu_{2}}}, \ldots, a_{\nu}^{q^{\mu_{\nu}}}, a_{\nu + 1}, \ldots, a_{\ell}\}$ be a finite subset of integers, where $1 \leq \mu_{i} \leq m-1$ and $a_{j}$ are $q$-free integers for every $\nu + 1 \leq i \leq \ell$. If $C$ contains a $q^{m}$-th power in $\mathbb{Z}_{p}$, for almost every prime $p$, then the set $A = \{a_{1}, a_{2}, \ldots, a_{\nu}, a_{\nu + 1}, \ldots, a_{\ell}\}$ contains a $q$-th power in $\mathbb{Z}_{p}$, for almost every $p$. \label{qfree}
\end{lemma}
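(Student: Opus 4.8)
The plan is to prove the contrapositive and to transport everything to the hyperplane-covering criterion of Proposition \ref{thmFFA}. First I would dispose of the degenerate cases: if $A$ contains $1$ or $-1$ then, since $q$ is odd, $A$ contains a perfect $q$-th power ($1=1^q$, $-1=(-1)^q$) and the conclusion is immediate; hence under the contrapositive hypothesis that $A$ does \emph{not} contain a $q$-th power in $\mathbb{Z}_p$ for almost every $p$, we automatically have $\pm 1 \notin A$. Combined with the assumption that the $a_j$ are $q$-free, this lets me invoke Proposition \ref{thmFFA}. Write $a_j = \prod_{i=1}^{s} p_i^{e_{ij}}$, where $p_1,\dots,p_s$ are the primes dividing $\prod_j a_j$ (the same primes divide $\prod_j a_j^{q^{\mu_j}}$, so the prime support of $A$ and of the set in the hypothesis coincide), and set $\mu_j = 0$ for $\nu < j \le \ell$. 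By Proposition \ref{thmFFA}, the failure of $A$ to contain a $q$-th power in $\mathbb{Z}_p$ for almost every $p$ means the hyperplanes $\mathcal{H}_j = \{x \in \mathbb{F}_q^s : \sum_i e_{ij}x_i = 0\}$ do not cover $\mathbb{F}_q^s$; fix a point $y = (y_i) \in \mathbb{F}_q^s$ lying on none of them, so that $\sum_i e_{ij} y_i \not\equiv 0 \pmod q$ for every $1 \le j \le \ell$.

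Next I would manufacture, from this single bad point, infinitely many primes $p$ at which the set in the hypothesis contains no $q^m$-th power, contradicting the assumption placed on it. The idea is to lift $y$ to $(\mathbb{Z}/q^m)^s$: choose any $x = (x_i) \in (\mathbb{Z}/q^m)^s$ with $x_i \equiv y_i \pmod q$. By Kummer theory the distinct primes $p_1,\dots,p_s$ are multiplicatively independent modulo $(K^{\times})^{q^m}$ for $K = \mathbb{Q}(\zeta_{q^m})$, so $\mathrm{Gal}\big(K(p_1^{1/q^m},\dots,p_s^{1/q^m})/K\big) \cong (\mathbb{Z}/q^m)^s$, and the Chebotarev density theorem furnishes infinitely many primes $p$, coprime to $q\prod_j a_j$ and splitting completely in $K$ (equivalently $p \equiv 1 \pmod{q^m}$), for which $p_i$, viewed in $\mathbb{F}_p^{\times}/(\mathbb{F}_p^{\times})^{q^m} \cong \mathbb{Z}/q^m$, equals $x_i$ for every $i$. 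For such a prime and each $j$, the element $a_j^{q^{\mu_j}} = \prod_i p_i^{\,q^{\mu_j} e_{ij}}$ is a $q^m$-th power modulo $p$ if and only if $q^{\mu_j}\sum_i e_{ij}x_i \equiv 0 \pmod{q^m}$, that is, if and only if $\sum_i e_{ij}x_i \equiv 0 \pmod{q^{\,m-\mu_j}}$.

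The crux is now the elementary observation that $1 \le \mu_j \le m-1$ for the power indices and $\mu_j = 0$ otherwise, so in every case $m - \mu_j \ge 1$. Since $x_i \equiv y_i \pmod q$ we get $\sum_i e_{ij}x_i \equiv \sum_i e_{ij}y_i \not\equiv 0 \pmod q$, and a residue that is nonzero modulo $q$ is a fortiori nonzero modulo $q^{\,m-\mu_j}$; hence $\sum_i e_{ij}x_i \not\equiv 0 \pmod{q^{\,m-\mu_j}}$ for every $j$. Therefore no element of the set in the hypothesis is a $q^m$-th power modulo $p$, and since $p \nmid q\prod_j a_j$, Hensel's lemma shows none is a $q^m$-th power in $\mathbb{Z}_p$ either. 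As this holds for infinitely many $p$, the set in the hypothesis does not contain a $q^m$-th power in $\mathbb{Z}_p$ for almost every $p$, which is exactly the contrapositive of the lemma.

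The main obstacle is the second step: producing primes with prescribed $q^m$-th power residue behavior of $p_1,\dots,p_s$. One must check that the distinct primes $p_i$ are multiplicatively independent modulo $q^m$-th powers in $\mathbb{Q}(\zeta_{q^m})$, so that the relevant Kummer group is the full $(\mathbb{Z}/q^m)^s$ and Chebotarev realizes every prescribed Frobenius; this is precisely the mechanism underlying Proposition \ref{thmFFA}, and one could alternatively repackage the construction through the contrapositive of Skalba's criterion (Proposition \ref{Skalba}) applied to the set in the hypothesis. Everything else—the reductions, the passage through Proposition \ref{thmFFA}, and the final congruence argument—is routine, with the inequality $m - \mu_j \ge 1$ doing the real work.
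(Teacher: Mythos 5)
Your proof is correct in substance, but it takes a genuinely different route from the paper's. The paper never leaves Ska\l{}ba's multiplicative criterion (Proposition \ref{Skalba}): from a witness vector $(d_{j})$ certifying that $A$ misses $q$-th powers for infinitely many $p$, it builds the new vector $e_{j} = q^{a-\mu_{j}}d_{j}$ (where $a = \max_{j\leq \nu}\mu_{j} \leq m-1$, and $\mu_{j}=0$ for $j>\nu$), observes that every relevant product of the $c_{j}^{e_{j}}$ is then exactly the $q^{a}$-th power of the corresponding product of the $a_{j}^{d_{j}}$, and finishes with the elementary fact that, for odd $q$, a rational that is not a perfect $q$-th power cannot become a perfect $q^{m}$-th power after being raised to the $q^{a}$ with $a \leq m-1$; a second appeal to Proposition \ref{Skalba}, now for $C$ and $q^{m}$, concludes. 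This is short, purely multiplicative, and needs neither $q$-freeness nor the exclusion of $\pm 1$. Your route instead converts the failure for $A$ into a non-covered point $y$ of the hyperplane system (Proposition \ref{thmFFA}), lifts $y$ modulo $q^{m}$, and manufactures the bad primes for $C$ explicitly via Kummer theory and Chebotarev; your inequality $m-\mu_{j}\geq 1$ plays exactly the role of the paper's exponent identity. What your approach buys is explicitness: you exhibit a positive density of primes $p$ at which $C$ contains no $q^{m}$-th power, rather than inferring their existence from the criterion. What it costs is the extra machinery you correctly flag as the crux: the multiplicative independence of distinct rational primes modulo $(K^{\times})^{q^{m}}$ in $K=\mathbb{Q}(\zeta_{q^{m}})$, which holds for odd $q$ but is asserted rather than proved in your sketch.

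Two caveats, both repairable. First, Proposition \ref{thmFFA} requires \emph{all} elements of $A$ to be $q$-free, whereas the lemma guarantees this only for $a_{\nu+1},\dots,a_{\ell}$; in the paper's own application (Lemma \ref{primepower}) the first $\nu$ entries are merely non-$q$-th powers, so your stronger reading of the hypothesis would not suffice there. The fix inside your framework is routine: replace each $a_{j}$ by its $q$-free kernel before invoking Proposition \ref{thmFFA}, and note that your final congruence $\sum_{i} e_{ij}x_{i} \not\equiv 0 \ (\mathrm{mod}\ q^{m-\mu_{j}})$ depends only on the exponents and the $x_{i}$ modulo $q$, so it is unaffected. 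Second, Chebotarev in $\mathrm{Gal}(L/\mathbb{Q})$ prescribes Frobenius only up to conjugacy, i.e., up to multiplying your tuple $x$ by a unit of $\mathbb{Z}/q^{m}$ (the cyclotomic twist); this is harmless since non-vanishing modulo $q$ is preserved under unit scaling, but it deserves a sentence, or else apply Chebotarev to the abelian extension $L/K$ and descend via degree-one primes.
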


\begin{proof}
Let $C = \{c_{j}\}_{j=1}^{\ell}$, where $c_{j} = \begin{cases} a_{j}^{q^{\mu_{j}}} ; \text{ for } 1 \leq j \leq \nu \\ a_{j} ; \text{ for } \nu+1 \leq j \leq \ell. \end{cases}$ For the sake of contradiction, assume that $A$ fails to contain $q^{th}$ power in $\mathbb{Z}_{p}$ for infinitely many primes $p$. Then, Proposition \ref{Skalba} implies:
\begin{multline}
       \\ \text{ There exists } (d_{j})_{j=1}^{\ell} \in\mathbb{Z}^{\ell} \text{ such that, for every } E, F \subset\{1, 2, \ldots, \ell\} \\ \text{ with } |E| \not\equiv |F| \hspace{1mm}(\text{mod } q), \text{ we have that } \frac{\prod_{j \in E} a_{j}^{d_{j}}}{\prod_{j \in F} a_{j}^{d_{j}}} \text{ is not a } q^{th} \text{ power. } \label{intermediate} \\
\end{multline}

Define $a := \mathrm{max}_{1 \leq j \leq \nu}\{ \mu_{j} \}$ which is at most $m-1$ and \[(e_{j})_{j=1}^{\ell}\in\mathbb{Z}^{\ell} \text{ by setting } e_{j} = \begin{cases} q^{a-\mu_{j}} \cdot  d_{j} ; \text{ for } 1 \leq j \leq \nu \\ q^{a} d_{j} ; \text{ for } \nu + 1 \leq j \leq \ell. \end{cases} \]
Then, for any $E, F \subset\{1, 2, \ldots, \ell\}$ with $|E| \not\equiv |F|\hspace{1mm} (\text{mod } q)$, we have that 
\begin{multline*}
  \\ \frac{\prod_{j \in E} c_{j}^{e_{j}}}{\prod_{j \in F} c_{j}^{e_{j}}} \\\\ = \frac{\prod_{j \in E, 1 \leq j \leq \nu} c_{j}^{e_{j}}}{\prod_{j \in F, 1 \leq j \leq \nu} c_{j}^{e_{j}}} \times \frac{\prod_{j \in E, \nu + 1 \leq j \leq \ell} c_{j}^{e_{j}}}{\prod_{j \in F, \nu + 1 \leq j\leq \ell} c_{j}^{e_{j}}} \\\\ =  \frac{\prod_{j \in E, 1 \leq j \leq \nu} \big( a_{j}^{q^{\mu_{j}}}\big)^{d_{j} \cdot q^{a-\mu_{j}}}}{\prod_{j \in F, 1 \leq j \leq \nu} \big( a_{j}^{q^{\mu_{j}}}\big)^{d_{j} \cdot q^{a-\mu_{j}}}} \times \frac{\prod_{j \in E, \nu + 1 \leq j \leq \ell} \big(a_{j}\big)^{q^{a} \cdot d_{j}}}{\prod_{j \in F, \nu + 1 \leq j\leq \ell} \big(a_{j}\big)^{q^{a} \cdot d_{j}}} \\
\end{multline*}
\begin{multline*}
    \\  = \frac{\prod_{j \in E, 1 \leq j \leq \nu} \big( a_{j}\big)^{d_{j} \cdot q^{a}}}{\prod_{j \in F, 1 \leq j \leq \nu} \big( a_{j}\big)^{d_{j} \cdot q^{a}}} \times \frac{\prod_{j \in E, \nu + 1 \leq j \leq \ell} \big(a_{j}\big)^{q^{a} \cdot d_{j}}}{\prod_{j \in F, \nu + 1 \leq j\leq \ell} \big(c_{j}\big)^{q^{a} \cdot d_{j}}} = \Bigg( \frac{\prod_{j\in E} a_{j}^{d_{j}}}{\prod_{j \in F} a_{j}^{d_{j}}} \Bigg)^{q^{a}}.\\
\end{multline*}
Since \[\frac{\prod_{j\in E} a_{j}^{d_{j}}}{\prod_{j \in F} a_{j}^{d_{j}}}\] is never a perfect $q^{th}$ power for any $|E| \not\equiv |F| \hspace{1mm} (\text{mod } q)$, and since $a \leq m-1$, we have that \[\frac{\prod_{j \in E} c_{j}^{e_{j}}}{\prod_{j \in F} c_{j}^{e_{j}}} = \Bigg( \frac{\prod_{j\in E} a_{j}^{d_{j}}}{\prod_{j \in F} a_{j}^{d_{j}}} \Bigg)^{q^{a}}\] is not a  perfect $q^{m}$-th power, for any $|E| \not\equiv |F| \hspace{1mm} (\text{mod } q)$. Here, we are using the fact that for an odd prime $q$ and rational integer $x$, if $x$ is not a perfect $q^{th}$ power then $x^{q^{m-1}}$ is not a perfect $q^{m}$-th power. 

To summarize, we have an $(e_{j})_{j=1}^{\ell}\in\mathbb{Z}^{\ell}$ such that for any $E, F \subset\{1, 2, \ldots, \ell\}$ with $|E| \not\equiv |F| (\text{mod } q)$, \[ \frac{\prod_{j \in E} c_{j}^{e_{j}}}{\prod_{j \in F} c_{j}^{e_{j}}}\] is not a $q^{m}$-th power. Therefore, as a consequence of Proposition \ref{Skalba}, the set $C$ does not contain a $q^{m}$-th power in $\mathbb{Z}_{p}$, for infinitely many primes $p$. 
\end{proof}

Using the previous lemma, we will upgrade Proposition \ref{FFA} to subsets $A$ that contain a $q^{m}$-th power in $\mathbb{Z}_{p}$ for almost every $p$ but do not contain a perfect $q^{m}$-th power. 
\begin{lemma}
Let $q$ be an odd prime, and let $A = \{a_{j}\}_{j=1}^{\ell}$ be a finite subset of integers not containing any perfect $q^{m}$-th power but containing a $q^{m}$-th power in $\mathbb{Z}_{p}$, for almost every prime $p$. Then, $|A| \geq q+1$. 
\label{primepower}
\end{lemma}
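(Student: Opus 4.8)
The plan is to reduce Lemma \ref{primepower} to the $q$-free case that is already handled by Proposition \ref{FFA}, using Lemma \ref{qfree} as the bridge. The subtlety I must first confront is that the elements of $A$ are only assumed to be integers; they may individually be perfect $q^r$-th powers for various $r < m$, and they need not be $q$-free. So the first step is to write each $a_j$ as $b_j^{q^{\mu_j}}$ where $b_j$ is $q$-free (i.e.\ strip off the largest $q$-power-of-an-exponent common to the factorization of $a_j$), with $0 \leq \mu_j$. Concretely, for each $j$ let $\mu_j$ be the largest integer such that $a_j$ is a perfect $q^{\mu_j}$-th power, and set $b_j = a_j^{1/q^{\mu_j}}$, which is then $q$-free.

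\medskip

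I would then split into two cases according to whether any $\mu_j \geq m$. If some $\mu_j \geq m$, then that $a_j = b_j^{q^{\mu_j}}$ is itself a perfect $q^m$-th power, contradicting the hypothesis that $A$ contains no perfect $q^m$-th power; so in fact every $\mu_j \leq m-1$. This places $A$ exactly in the form required by Lemma \ref{qfree}, namely $A = \{b_1^{q^{\mu_1}}, \ldots, b_\ell^{q^{\mu_\ell}}\}$ with each $b_j$ a $q$-free integer and each exponent $\mu_j$ satisfying $1 \leq \mu_j \leq m-1$ (reindexing so that those with $\mu_j = 0$ are simply the already-$q$-free elements). Applying Lemma \ref{qfree}, the hypothesis that $A$ contains a $q^m$-th power in $\mathbb{Z}_p$ for almost every $p$ forces the \emph{stripped} set $A' := \{b_1, b_2, \ldots, b_\ell\}$ to contain a $q$-th power in $\mathbb{Z}_p$ for almost every prime $p$.

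\medskip

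Next I would verify that $A'$ contains no perfect $q$-th power, so that Proposition \ref{FFA} applies to it. Indeed, if some $b_j$ were a perfect $q$-th power, say $b_j = c^q$, then $a_j = b_j^{q^{\mu_j}} = c^{q^{\mu_j + 1}}$; but $b_j$ was chosen $q$-free, which means $\mu_j$ was not maximal unless $\mu_j$ already accounts for this, i.e.\ $b_j$ $q$-free precludes $b_j$ being a perfect $q$-th power by definition. A small care is needed for the elements $\pm 1$: if some $b_j = 1$ then $A'$ (hence $A$) contains a perfect $q^m$-th power, again contradicting the hypothesis; and since $q$ is odd, $-1 = (-1)^q$ is a perfect $q$-th power, so $b_j = -1$ would likewise be excluded. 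Thus $A'$ is a finite set of $q$-free integers containing neither $1$ nor a perfect $q$-th power, yet containing a $q$-th power in $\mathbb{Z}_p$ for almost every $p$. Proposition \ref{FFA} then yields $|A'| \geq q+1$, and since stripping powers is a bijection $a_j \mapsto b_j$ we have $|A| = |A'| \geq q+1$, as desired.

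\medskip

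I expect the main obstacle to be the bookkeeping around degenerate elements rather than any deep idea: one must ensure that after stripping to the $q$-free part no collapse occurs that would make $A'$ contain a perfect $q$-th power or the element $1$ (which would break the applicability of Proposition \ref{FFA}), and that the exponents genuinely land in the range $1 \leq \mu_j \leq m-1$ demanded by Lemma \ref{qfree}. The hypothesis ``$A$ contains no perfect $q^m$-th power'' is exactly what rules out the bad cases $\mu_j \geq m$ and $b_j \in \{1\}$, so the whole argument hinges on deploying that hypothesis at each juncture; verifying this carefully is the crux, while the algebraic reduction itself is an immediate consequence of the two preceding lemmas.
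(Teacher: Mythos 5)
Your proof is correct and takes essentially the same route as the paper: write each element as $a_j = b_j^{q^{\mu_j}}$ with $\mu_j \leq m-1$ maximal (so $b_j$ is not a perfect $q$-th power), pass to the stripped set via Lemma \ref{qfree}, and apply Proposition \ref{FFA} to it. The one small overstatement is that $a_j \mapsto b_j$ need not be a bijection (e.g.\ $\{2, 2^{q}\}$ strips to $\{2\}$), but only the automatic inequality $|A| \geq |A'|$ is needed, so the conclusion is unaffected --- and the paper's own proof asserts the same equality $|B| = \ell$ with the same harmlessness.
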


\begin{proof}
If $A$ did not contain any perfect $q^{th}$ power, the lemma would follow from Proposition \ref{FFA}. Therefore, assume that upon re-indexing elements of $A$, $a_{1}, a_{2}, \ldots, a_{\nu}$ of them are each perfect $q^{\mu_{j}}$-th powers, for some $1 \leq \mu_{j} \leq m-1$ respectively.

Since $A$ does not contain a perfect $q^{m}$-th power, for every $1 \leq j \leq \nu$, there exists a largest natural number $\mu_{\nu} \leq m-1$ such that $a_{\nu} = (b_{\nu})^{q^{\mu_{\nu}}}$ and $b_{\nu}$ is not a perfect $q^{th}$ power.  Using Lemma \ref{qfree}, we have that the set $B = \{b_{1}, b_{2}, \ldots, b_{\nu}, a_{\nu+1}, \ldots, a_{\ell}\}$ then contains a $q^{th}$ power in $\mathbb{Z}_{p}$, for almost every prime $p$. Since $B$ does not contain a $q^{th}$ power anymore, Proposition \ref{FFA} implies that $|B| = \ell = |A| \geq q + 1$.
\end{proof}

To prove Theorem \ref{mainresult} for general odd numbers $n$, we will need the following lemma, which shows that the property of subsets of integers to contain a $q^{m}$-th power in $\mathbb{Z}_{p}$, for almost every $p$, is invariant under exponentiation by tuples with entries in $\mathbb{F}_{q}^{\times}$ in the following sense. For $m = 1$, the lemma was first proved by Mishra in \cite{MishraFFA}.

\begin{lemma}
Let $q$ be an odd prime and $m$ be a natural number. Let $A = \{a_{j}\}_{j=1}^{\ell}$ be a finite subset of integers and $(c_{j})_{j=1}^{\ell} \in \mathbb{Z}^{\ell}$ such that $c_{j} \not\equiv 0 \hspace{1mm} (\text{mod } q)$ for every $1 \leq j \leq \ell$. Then $A$ contains a $q^{m}-\mathrm{th}$ power in $\mathbb{Q}_{p}$ for almost every $p$ if and only if $A^{\Vec{c}}=\{a_{j}^{c_{j}}\}_{j=1}^{\ell}$ does the same. \label{exponentiation2}
\end{lemma}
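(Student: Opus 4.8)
The plan is to reduce both statements to the arithmetic criterion of Proposition \ref{Skalba} and then match up the two universally quantified exponent tuples that appear in it. To avoid a clash with the fixed tuple $(c_j)$ of the lemma, I would write Skałba's quantified tuple as $(t_j)$. Thus $A$ contains a $q^m$-th power in $\mathbb{Q}_p$ for almost every $p$ if and only if for every $(t_j)_{j=1}^{\ell}\in\mathbb{Z}^{\ell}$ there are subsets $B,C\subseteq\{1,\dots,\ell\}$ with $|B|\not\equiv|C|\pmod q$ for which $\prod_{j\in B}a_j^{t_j}/\prod_{j\in C}a_j^{t_j}$ is a perfect $q^m$-th power; and $A^{\vec{c}}$ satisfies the analogous criterion with $a_j$ replaced by $a_j^{c_j}$, i.e. with the quotient $\prod_{j\in B}a_j^{c_j t_j}/\prod_{j\in C}a_j^{c_j t_j}$.

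For the forward implication, assume $A$ satisfies the criterion and let $(t_j)$ be an arbitrary tuple to be tested against $A^{\vec{c}}$. I would simply feed the tuple $(c_j t_j)$ into the criterion for $A$: this produces $B,C$ with $|B|\not\equiv|C|\pmod q$ such that $\prod_{j\in B}a_j^{c_j t_j}/\prod_{j\in C}a_j^{c_j t_j}$ is a perfect $q^m$-th power, which is exactly the quantity demanded by the criterion for $A^{\vec{c}}$ at the tuple $(t_j)$. This direction is a pure substitution and uses nothing about the $c_j$.

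The converse is where the hypothesis $c_j\not\equiv 0\pmod q$ enters. The key observation is that a non-zero rational $\prod_j a_j^{f_j}$ is a perfect $q^m$-th power if and only if $\sum_j f_j\, v_p(a_j)\equiv 0\pmod{q^m}$ for every prime $p$, where $v_p(\cdot)$ is the $p$-adic valuation; since $q$ is odd the sign is irrelevant, so this property depends only on the residues $f_j\bmod q^m$. Consequently, replacing each exponent by a congruent one modulo $q^m$ preserves the property of being a perfect $q^m$-th power. Now, given an arbitrary tuple $(e_j)$ to be tested against $A$, the fact that $q\nmid c_j$ gives $\gcd(c_j,q^m)=1$, so I can choose integers $t_j$ with $c_j t_j\equiv e_j\pmod{q^m}$. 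Applying the criterion for $A^{\vec{c}}$ to $(t_j)$ yields $B,C$ with $|B|\not\equiv|C|\pmod q$ making $\prod_{j\in B}a_j^{c_j t_j}/\prod_{j\in C}a_j^{c_j t_j}$ a perfect $q^m$-th power; by the congruence $c_j t_j\equiv e_j\pmod{q^m}$ together with the observation above, $\prod_{j\in B}a_j^{e_j}/\prod_{j\in C}a_j^{e_j}$ is then a perfect $q^m$-th power as well, verifying the criterion for $A$.

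I expect the only genuinely delicate step to be the valuation characterization of perfect $q^m$-th powers and the accompanying congruence reduction. In particular, I would take care to apply the ``exponents may be taken modulo $q^m$'' argument to the full quotient — allowing $B$ and $C$ to overlap, with shared factors simply cancelling — and to state explicitly that the oddness of $q$ is what removes any obstruction coming from signs. Everything else is a formal transfer through Proposition \ref{Skalba}, with the two implications being mirror images of each other once the invertibility of $c_j$ modulo $q^m$ is in hand.
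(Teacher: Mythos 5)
Your proposal is correct and is essentially the paper's own proof written in direct rather than contrapositive form: both arguments reduce the statement to Proposition \ref{Skalba} and exploit the invertibility of each $c_{j}$ modulo $q^{m}$, with one direction being pure substitution of the tuple $(c_{j}t_{j})$ and the other resting on the fact that altering exponents by multiples of $q^{m}$ changes the product only by a perfect $q^{m}$-th power. The paper encodes this last fact through the explicit identity $c_{j}d_{j} = 1 + k_{j}q^{m}$ and transfers failure witnesses instead of success witnesses, but the mathematical content is the same (and your valuation-based justification, while heavier than the direct factorization $\prod_{j} a_{j}^{f_{j}} = \prod_{j} a_{j}^{f_{j}'}\cdot\bigl(\prod_{j} a_{j}^{g_{j}}\bigr)^{q^{m}}$, is valid for odd $q$).
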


\begin{proof}
Since $c_{j} \not\equiv 0 \hspace{1mm} (\text{mod } q)$ for every $1 \leq j \leq \ell$, there exists $(d_{j})_{j=1}^{\ell}\in\mathbb{Z}^{\ell}$ such that for every $1 \leq j \leq \ell$, $c_{j}d_{j} \equiv 1 \hspace{1mm} (\text{mod } q^{m})$, i.e., $c_{j}d_{j} = 1 + k_{j} \cdot q^{m}$ for some $k_{j}\in\mathbb{Z}$. For every $E, F \subset\{1, 2, \ldots, \ell\}$ and integers $\alpha_{1}, \alpha_{2}, \ldots, \alpha_{\ell}$, we have that

\begin{multline}
    \\ \frac{\prod_{j\in E}(\alpha_{j}^{c_{j}})^{d_{j}}}{\prod_{j\in F} (\alpha_{j}^{c_{j}})^{d_{j}}} = \frac{\prod_{j\in E}\alpha_{j}^{1 + k_{j}q^{m}}}{\prod_{j\in F} \alpha_{j}^{1 + k_{j}q^{m}}} = \Big( \frac{\prod_{j\in E}\alpha_{j}^{k_{j}}}{\prod_{j\in F} \alpha_{j}^{k_{j}}} \Big)^{q^{m}} \times \frac{\prod_{j\in E}\alpha_{j}}{\prod_{j\in F} \alpha_{j}}. \label{generalexpo}\\
\end{multline}
On the one hand, if $A$ does not contain a $q^{m}$-th power in $\mathbb{Z}_{p}$ for infinitely many primes $p$, then by Proposition \ref{Skalba}, there exists $(b_{j})_{j=1}^{\ell} \in\mathbb{Z}^{\ell}$ such that for any two subsets $E, F$ of $\{1, 2, \ldots, \ell\}$ with $|E| \not\equiv |F| \hspace{1mm} (\text{mod } q)$, \[\frac{\prod_{j\in E}a_{j}^{b_{j}}}{\prod_{j\in F} a_{j}^{b_{j}}}\] is not equal to a $q^{m}$-th power. In this case, replacing $\alpha_{j}$'s by $a_{j}^{b_{j}}$'s in \eqref{generalexpo} shows that \[\frac{\prod_{j\in E}(a_{j}^{c_{j}})^{b_{j} d_{j}}}{\prod_{j\in F} (a_{j}^{c_{j}})^{b_{j} d_{j}}}\] is not a $q^{m}$-th power either. In other words, there exists $(b_{j}d_{j})_{j=1}^{\ell}\in\mathbb{Z}^{\ell}$ such that for every $E, F \subset\{1, 2, \ldots, \ell\}$, with $|E| \not\equiv |F| \hspace{1mm} (\text{mod } q)$, we have that \[\frac{\prod_{j\in E}(a_{j}^{c_{j}})^{b_{j} d_{j}}}{\prod_{j\in F} (a_{j}^{c_{j}})^{b_{j} d_{j}}}\] is not a $q^{m}$-th power. So, Proposition \ref{Skalba} implies that $A^{\Vec{c}}$ does not contain a $q^{m}$-th power in $\mathbb{Z}_{p}$, for infinitely many primes $p$.

On the other hand, if $A^{\Vec{c}}$ does not contain a $q^{m}$-th power in $\mathbb{Z}_{p}$ for infinitely many primes $p$, then Proposition \ref{Skalba} again implies that there is a $(e_{j})_{j=1}^{\ell}\in\mathbb{Z}^{\ell}$ such that \[\frac{\prod_{j\in E}(a_{j}^{c_{j}})^{e_{j}}}{\prod_{j\in F} (a_{j}^{c_{j}})^{e_{j}}} = \frac{\prod_{j\in E}(a_{j})^{c_{j}e_{j}}}{\prod_{j\in F}(\alpha_{j})^{c_{j} e_{j}}}\] is not a $q^{m}$-th power for any $|E| \not\equiv |F| \hspace{1mm} (\text{mod } q)$. Then, Proposition \ref{Skalba} at once implies that $A$ does not contain a $q^{m}$-th power in $\mathbb{Z}_{p}$, for a positive density of primes.
\end{proof}

Now, we are ready to prove Theorem \ref{mainresult} for a general odd natural number. 
\subsection{Proof of Theorem \ref{mainresult}, when $n$ is odd:}
If $A$ does not contain a perfect $p_{i}^{th}$ power for some $1 \leq i \leq k$, then Proposition \ref{FFA} implies $|A| \geq p_{i}+1 \geq p_{1} + 1$. On the other hand, if $A$ does not contain a perfect $p_{i}^{a_{i}}$-th power, then Lemma \ref{primepower} gives us that $|A| \geq p_{i} + 1 \geq p_{1} + 1$. However, as noted before, $A$ may contain a $p_{i}^{a_{i}}$-th power for every $1 \leq i \leq k$.

If such is the case, since $A$ does not contain a perfect $n^{th}$ power, there exists a divisor $m$ of $n$ such that the following holds:
\begin{itemize}
    \item $m = \prod_{i=1}^{\mu} p_{i}^{c_{i}}$ and there exists $1 \leq i_{0} \leq \mu$ such that $c_{i_{0}} < a_{i_{0}}$, upon reordering of $p_{i}$s if necessary.

    \item If $A$ contains a perfect $m^{\prime}$-th power for any other divisor $m^{\prime}$ of $n$, then the largest power of $p_{i_{0}}$ that divides $m^{\prime}$ is $\leq p_{i_{0}}^{c_{i_{0}}}$. 
\end{itemize}

Let us assume that $A = \{a_{1}^{m}, a_{2}^{m}, \ldots, a_{\nu_{1}}^{m}, a_{\nu_{1}+1}, \ldots, a_{\ell}\}$ for some $\nu_{1} \geq 1$, upon reindexing elements of $A$ if necessary. For each $1 \leq j \leq \nu_{1}$, we have \[a_{j}^{m} = \big( a_{j}^{p_{i_{0}}^{c_{i_{0}}}} \big)^{\prod_{i=1, i \neq i_{0}}^{\mu} p_{i}^{c_{i}}}.\]
Furthermore, for each $\nu_{1} + 1 \leq j \leq \ell$, \[ a_{j} = \Big( b_{j}^{p_{i_{0}}^{d_{j}}}\Big)^{e_{j}},\]
for some $0 \leq d_{j} \leq c_{i_{0}} < a_{i_{0}}$, $(e_{j}, p_{i_{0}}) = 1$ and $p_{i_{0}}$-th power free $b_{j}$. Since $A$ contains a $p_{i_{0}}^{a_{i_{0}}}$-th power in $\mathbb{Z}_{p}$ for almost every prime $p$, Proposition \ref{exponentiation2} implies that the set \[ A^{\prime} := \Bigg\{a_{1}^{p_{i_{0}}^{c_{i_{0}}}}, a_{2}^{p_{i_{0}}^{c_{i_{0}}}}, \ldots, a_{\nu_{1}}^{p_{i_{0}}^{c_{i_{0}}}}, b_{\nu_{1}+1}^{p_{i_{0}}^{d_{\nu_{1}+1}}}, \ldots, b_{\ell}^{p_{i_{0}}^{d_{\ell}}}\Bigg\}\] still contains a $p_{i_{0}}^{a_{i_{0}}}$-th power in $\mathbb{Z}_{p}$, for almost every prime $p$. Note that the set $A^{\prime}$ does not contain a perfect $p_{i_{0}}^{a_{i_{0}}}$-th power. Therefore, using Lemma \ref{primepower}, we have that $|A^{\prime}| = \ell = |A| \geq p_{i_{0}} + 1 \geq p_{1} + 1$. $\square$

\section{When $n$ is Even.}\label{evensection}
Theorem \ref{mainresult} for even $n$ is a direct consequence of the following result obtained by Schinzel and Ska\l{}ba, for two element subsets of a general number field. Again, we only state the result in the context of rational numbers.

\begin{proposition}\big(See {\cite[Corollary 4]{SS}}\big)
Let $n = 2^{a_{0}} \prod_{i=1}^{k} p_{i}^{a_{i}}$ be a natural number, where $ a_{0} \geq 0$ and $p_{1} < p_{2} < \ldots < p_{k}$ are odd primes, and let $A$ be a $2$-element subset of rationals. Then $A$ contains a $n^{th}$ power in $\mathbb{Q}_{p}$, for almost every prime $p$ if and only if either $A$ contains a $n^{th}$ power or one of the following holds:
\begin{enumerate}
    \item $a_{0} = 1$, $n \neq 2$ and \[ A = \Big\{ \big((-1)^{\frac{p_{j}-1}{2}} \cdot p_{j}\big)^{\frac{n}{2}} \alpha_{1}^{n}, \alpha_{2}^{\frac{n}{p_{j}^{a_{j}}}}\Big\}, \text{ for some } 1 \leq j \leq k \text{ and } \gamma_{1}, \gamma_{2}\in\mathbb{Q}. \]
    
    \item $a_{0} = 2$ and for some $1 \leq j \leq k$ and $\alpha_{1}, \alpha_{2} \in\mathbb{Q}$,
    \begin{multline*}
        A \in \Bigg\{ \bigg\{ -2^{n/2} \cdot \alpha_{1}^{n}, \alpha_{2}^{n/2}\bigg\}, \bigg\{ p_{j}^{n/2} \cdot \alpha_{1}^{n}, \alpha_{2}^{\bigg(\frac{n}{p_{j}^{a_{j}}}\bigg)} \bigg\}, \bigg\{p_{j}^{n/2} \cdot \alpha_{1}^{n}, -2^{\bigg(\frac{n}{2p_{j}^{a_{j}}}\bigg)} \cdot \alpha_{2}^{\bigg(\frac{n}{p_{j}^{a_{j}}}\bigg)} \bigg\} \Bigg\}
    \end{multline*}

    \item $a_{0} \geq 3$ and one of the following holds:
    \begin{enumerate}
        \item $A$ contains an element of the form $2^{\frac{n}{2}} \cdot \alpha^{n}$ for some $\alpha\in\mathbb{Q}$.

        \item For some $1 \leq j \leq k$ and $\alpha_{1}, \alpha_{2} \in\mathbb{Q}$,
         \begin{multline*}
         A \in \Bigg\{ \bigg\{ p_{j}^{\frac{n}{2}} \cdot \alpha_{1}^{n}, \alpha_{2}^{\bigg(\frac{n}{p_{j}^{a_{j}}}\bigg)} \bigg\},  \bigg\{ p_{j}^{\frac{n}{2}} \cdot \alpha_{1}^{n}, 2^{\bigg(\frac{n}{2p_{j}^{a_{j}}}\bigg)} \cdot \alpha_{2}^{\bigg(\frac{n}{p_{j}^{a_{j}}}\bigg)} \bigg\}, \bigg\{ 2^{\frac{n}{2}} \cdot p_{j}^{\frac{n}{2}} \cdot \alpha_{1}^{n}, \alpha_{2}^{\bigg(\frac{n}{p_{j}^{a_{j}}}\bigg)} \bigg\} \\ \bigg\{ 2^{\frac{n}{2}} \cdot p_{j}^{\frac{n}{2}} \cdot \alpha_{1}^{n}, 2^{\bigg(\frac{n}{2p_{j}^{a_{j}}}\bigg)} \cdot \alpha_{2}^{\bigg(\frac{n}{p_{j}^{a_{j}}}\bigg)} \bigg\} \Bigg\}
         \end{multline*}
    \end{enumerate}
\end{enumerate}
\label{even-hammer}
\end{proposition}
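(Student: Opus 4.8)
The plan is to reduce the odd part of $n$ to a purely global statement using the material already developed, and to analyze the remaining $2$-adic part through the Chebotarev density theorem. By the Remark we may assume $x,y\in\mathbb{Z}$ and write $A=\{x,y\}$. First observe that if $z\in A$ is an $n$th power in $\mathbb{Q}_p$ then it is also a $q^{a_q}$-th power there for every prime $q\mid n$; hence the hypothesis forces $A$ to contain a $q^{a_q}$-th power in $\mathbb{Z}_p$ for almost every $p$, for each $q\mid n$ separately. For an \emph{odd} prime $p_i\mid n$ we have $|A|=2\le p_i$, so Lemma \ref{primepower} (in contrapositive form) shows that $A$ must in fact contain a perfect $p_i^{a_i}$-th power. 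Thus for every odd $p_i\mid n$ at least one of $x,y$ is a genuine $p_i^{a_i}$-th power; the only place where the two elements can share the burden of being a power, and the only source of the exceptional families, is the prime $2$ together with the way the odd-prime powers are glued to the $2$-adic data.

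Next I would recast the local condition via Chebotarev. Put $L=\mathbb{Q}\big(\zeta_n,x^{1/n},y^{1/n}\big)$, a finite Galois extension with group $G$, and let $S$ be the finite set of primes ramifying in $L$ or dividing $xy$. For $p\notin S$, Hensel's lemma gives $x\in(\mathbb{Q}_p^{\times})^n$ iff $T^n-x$ has a root modulo $p$, i.e.\ iff $\mathrm{Frob}_p$ fixes a root of $T^n-x$ in $L$, and likewise for $y$. Since each conjugacy class of $G$ equals $\mathrm{Frob}_p$ for a positive density of primes, the hypothesis is equivalent to the covering condition
\[
(\mathrm{C})\qquad \text{every } g\in G \text{ fixes a root of } T^n-x \ \text{ or a root of } T^n-y .
\]
Identifying the roots of $T^n-z$ with a $\mu_n$-torsor, each $g$ acts on them as an affine map $t\mapsto \chi(g)\,t+b_z(g)$, where the multiplier $\chi(g)$ is the (shared) cyclotomic character and $b_z(g)$ is the Kummer cocycle of $z$; by the Chinese Remainder Theorem $g$ fixes a $z$-root iff the affine map has a fixed point in each $q$-component, i.e.\ iff $v_q\big(b_z(g)\big)\ge v_q\big(\chi(g)-1\big)$ for all $q\mid n$. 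A perfect $n$th power gives a rational root fixed by all of $G$, so (C) then holds trivially; the task is to classify the remaining solutions.

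The decisive structural input is that the quadratic subfields of cyclotomic fields supplied by Gauss sums, namely $\sqrt{(-1)^{(p_j-1)/2}p_j}\in\mathbb{Q}(\zeta_{p_j})$, $\;i\in\mathbb{Q}(\zeta_4)$, and $\sqrt{2},\sqrt{-2}\in\mathbb{Q}(\zeta_8)$, create dependencies between the $2$-adic Kummer data and the cyclotomic data at odd primes. Concretely, if $x\equiv\delta^{\,n/2}\pmod{(\mathbb{Q}^{\times})^n}$ with $\delta$ in the discriminant set $\{-1,2,-2\}\cup\{(-1)^{(p_j-1)/2}p_j\}$, then $\sqrt{\delta}$ is already an $n$th root of $x$ lying in $\mathbb{Q}(\zeta_n)\subseteq L$, and $g$ fixes this particular root exactly on the index-two subset of $G$ cut out by the corresponding quadratic character. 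This is precisely what lets a pair complement across different primes: one element, via such a Gauss-sum relation, covers one half of $G$, while the partner $y$ is engineered to cover the other half. Matching the complementary half forces $y$ to be a perfect $n$th power away from a single prime, i.e.\ $y\equiv\beta^{\,n/p_j^{a_j}}$ or $y\equiv\beta^{\,n/(2p_j^{a_j})}$ (the extra factor $2$ in the exponent occurring exactly when the quadratic coupling is itself $2$-adic). The degenerate branch of (C), in which one element alone fixes a root for \emph{every} $g$, is handled by Proposition \ref{GrunWang}: that element is then a perfect $n$th power or, when $8\mid n$, of the form $2^{n/2}b^n$, which accounts for case (a) under $a_0\ge 3$.

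Finally I would assemble the list by splitting on $a_0=v_2(n)$, since the available quadratic couplings depend on it: for $a_0=1$ we have $4\nmid n$, so $i\notin\mathbb{Q}(\zeta_n)$ and only $\sqrt{(-1)^{(p_j-1)/2}p_j}$ is usable, forcing the sign $(-1)^{(p_j-1)/2}$ and the exponent $n/2$ (odd here); for $a_0=2$ the presence of $i$ removes the sign constraint and adds the $-2^{n/2}$ family; and for $a_0\ge 3$ the full set $\sqrt2,\sqrt{-2},i\in\mathbb{Q}(\zeta_8)$ produces the $2^{n/2}$ Grunwald--Wang element and the richest list. Each displayed family is then checked to satisfy (C) directly (the designated element has a cyclotomic $n$th root fixed on exactly the complement of the partner's fixed-point locus), giving sufficiency. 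The main obstacle is this last $2$-adic bookkeeping: carefully enumerating which pairs $(\delta,\text{partner})$ actually complement, and pinning down the exact exponents $n/p_j^{a_j}$ versus $n/(2p_j^{a_j})$ and the signs. This is where the casework genuinely lives, and it is what makes the exceptional set finite and forces the trichotomy in $a_0$ rather than a single clean exception.
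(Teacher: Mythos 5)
You should first be aware that the paper contains no proof of this statement to compare against: Proposition \ref{even-hammer} is imported verbatim (specialized to $\mathbb{Q}$) from Schinzel and Ska{\l}ba \cite[Corollary 4]{SS}, and Section \ref{evensection} consists of nothing but that citation. So your argument must stand on its own, and its skeleton is in fact sound: the reduction to integers, the observation that for each odd $p_i \mid n$ the bound $|A| = 2 \leq p_i$ together with Lemma \ref{primepower} forces a perfect $p_i^{a_i}$-th power into $A$, the Chebotarev reformulation as the covering condition (C), and the affine fixed-point criterion on roots of $T^n - z$ are all correct (with the small repair that the criterion is $v_q\big(b_z(g)\big) \geq \min\big\{v_q\big(\chi(g)-1\big),\, v_q(n)\big\}$ for each $q \mid n$, not $v_q\big(b_z(g)\big) \geq v_q\big(\chi(g)-1\big)$).

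The genuine gap is that the entire content of the proposition --- the exact classification --- is described but never derived, and two specific steps would fail as written. First, you identify the set of $g$ fixing a root of $T^n - x$ with the stabilizer of the single distinguished root $\sqrt{\delta}\,\alpha$ (``exactly on the index-two subset cut out by the quadratic character''). That is false in general: an element moving $\sqrt{\delta}\,\alpha$ may fix a \emph{different} root, and the true fixed-locus is the valuation condition above, typically strictly larger than one root's stabilizer. Hence the ``one element covers one half of $G$, the partner covers the other half'' complementation does not, as stated, establish necessity; the covering analysis must be run with the full valuation criterion, prime-by-prime, which is precisely where couplings involving two different odd primes $p_j, p_{j'}$ (or an odd prime together with $2$ in more than one way) must be explicitly excluded --- your sketch never does this. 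Second, even granting that (C) forces the Kummer cocycle $b_y$ to vanish on a prescribed subset of $G$, converting that group-theoretic fact into the global conclusions $y = \alpha_2^{\,n/p_j^{a_j}}$ or $y = -2^{\,n/(2p_j^{a_j})}\alpha_2^{\,n/p_j^{a_j}}$ is itself a Grunwald--Wang-type descent: an element whose cocycle dies on a subgroup need not be a global power, and this is exactly the regime (e.g.\ $8 \mid n$, $\sqrt{2} \in \mathbb{Q}(\zeta_8)$) where extra solutions appear. You give no argument for this descent, and you likewise assert sufficiency of each listed family (``is then checked'') without performing the check. Since you yourself locate ``where the casework genuinely lives'' --- the signs $(-1)^{(p_j-1)/2}$, the dichotomy $n/p_j^{a_j}$ versus $n/(2p_j^{a_j})$, the trichotomy in $a_0$ --- and then stop, what you have is a credible program for reproving \cite[Corollary 4]{SS}, not a proof of it.
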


\section{Optimality of the lower bound in Theorem \ref{mainresult}}\label{Optimality}
In this section, we want to show that the lower bound obtained in Theorem \ref{mainresult} is optimal. In other words, we will show that if $p_{1}$ is the smallest prime dividing $n$ and $p_{1}^{a_{1}} \mid\mid n$, then there are infinitely many subsets of $\mathbb{Q}/\big(\mathbb{Q}^{\times}\big)^{p_{1}^{a_{1}}}$ with $\geq p_{1}+1$ elements that does not contain a $n^{th}$ power. When $n$ is even, we also want to show that these sets of cardinality $3$ do not properly contain sets listed in Proposition \ref{even-hammer}.

Note that for every pair of distinct primes $q_{1}, q_{2}$ different from $p_{1}$, the set \[\big\{q_{1}, q_{2}, q_{1}q_{2}, q_{1}q_{2}^{2}, \ldots, q_{1}q_{2}^{p_{1}-1}\big\}\] contains a $p_{1}^{th}$ power in $\mathbb{Z}_{p}$ for almost every prime $p$. When $p_{1}$ is odd, this is is a consequence of Proposition \ref{thmFFA} and the observation that the set of hyperplanes \[\big\{(x_{1}, x_{2}): x_{2} = 0 \text{ or } x_{1} + c \cdot x_{2} = 0 \text{ for some } c\in\{0, 1, \ldots, q-1\} \big\}\] covers $\mathbb{F}_{q}^{2}$. When $p_{1} = 2$, this is the consequence of the fact that the product $$ q_{1} \times q_{2} \times q_{1}q_{2}$$ is a perfect square. 

Recall that $n$ is a natural number with prime factorization $n = 2^{a_{0}} \prod_{i=1}^{k} p_{k}^{a_{k}}$, where $a_{0} \geq 0$ and $a_{i} \geq 1$. Now, we will show the optimality of the Theorem \ref{mainresult} according to the parity of $n$. 
\begin{enumerate}
    \item When $n$ is even, choose $2$ odd primes, $q_{1}$ and $q_{2}$. The set \[\{q_{1}, q_{2}, q_{1}q_{2}\}\] contains a square in $\mathbb{Z}_{p}$, for almost every $p$, as a consequence of Proposition \ref{Fr}. Then, the set \[\Big\{\big(q_{1}\big)^{2^{a_{0}-1} \cdot \prod_{i=1}^{k} p_{i}^{a_{i}}}, \big(q_{2}\big)^{2^{a_{0}-1} \cdot \prod_{i=1}^{k} p_{i}^{a_{i}}}, \big(q_{1}q_{2}\big)^{2^{a_{0}-1} \cdot \prod_{i=1}^{k} p_{i}^{a_{i}}} \Big\}\] contains a $n^{th}$ power in $\mathbb{Z}_{p}$, for almost every prime $p$. 

    \item When $n$ is odd, i.e., $a_{0} = 0$, choose $2$ odd primes $q_{1}$ and $q_{2}$. The set \[\Big\{q_{1}, q_{2}, q_{1}q_{2}, q_{1}q_{2}^{2}, \ldots, q_{1}q_{2}^{p-1} \Big\}\] contains a $p_{1}^{th}$ power in $\mathbb{Z}_{p}$, for almost every prime $p$, as discussed above. So, the set \[\Big\{q_{1}^{p_{1}^{a_{1}-1} \cdot \prod_{i=2}^{k} p_{i}^{a_{i}}}, q_{2}^{p_{1}^{a_{1}-1} \cdot \prod_{i=2}^{k} p_{i}^{a_{i}}}, \big(q_{1}q_{2}\big)^{p_{1}^{a_{1}-1} \cdot \prod_{i=2}^{k} p_{i}^{a_{i}}}, \ldots, \big(q_{1}q_{2}^{p-1}\big)^{p_{1}^{a_{1}-1} \cdot \prod_{i=2}^{k} p_{i}^{a_{i}}} \Big\}\] contains a $n^{th}$ power in $\mathbb{Z}_{p}$, for almost every prime $p$. 
\end{enumerate} 
Note that the sets in $(1)$ and $(2)$ do not contain perfect $p_{1}^{a_{1}}$-th power and that $q_{1}, q_{2}$ are any two distinct primes different from $p_{1}$. Therefore, we obtain infinitely many subsets of $\mathbb{Q}^{\times}/\big(\mathbb{Q}^{\times}\big)^{p_{1}^{a_{1}}}$ that have cardinality $(p_{1} + 1)$, contain a $n^{th}$ power in $\mathbb{Q}_{p}$ for almost every $p$, but do not contain any perfect $n^{th}$ power. 

\bibliographystyle{plain}
\bibliography{local_global}

\begin{thebibliography}{10}

\bibitem{ArtinTate}
Emil Artin and John Tate.
\newblock {\em Class field theory}.
\newblock AMS Chelsea Publishing, Providence, RI, 2009.
\newblock Reprinted with corrections from the 1967 original.

\bibitem{Creutz}
Brendan Creutz.
\newblock A {G}runwald-{W}ang type theorem for abelian varieties.
\newblock {\em Acta Arith.}, 154(4):353--370, 2012.

\bibitem{Sohail}
Sohail Farhangi and Richard Magner.
\newblock On the partition regularity of {$ax+by=cw^mz^n$}.
\newblock {\em Integers}, 23:Paper No. A18, 52, 2023.

\bibitem{FiRi}
Michael~A. Filaseta and David~R. Richman.
\newblock Sets which contain a quadratic residue modulo {$p$} for almost all {$p$}.
\newblock {\em Math. J. Okayama Univ.}, 31:1--8, 1989.

\bibitem{Fr}
Michael Fried.
\newblock {Arithmetical properties of value sets of polynomials}.
\newblock {\em Acta Arith.}, 15:91--115, 1969.

\bibitem{Grunwald}
Wilhelm Grunwald.
\newblock Ein allgemeines {E}xistenztheorem f\"{u}r algebraische {Z}ahlk\"{o}rper.
\newblock {\em J. Reine Angew. Math.}, 169:103--107, 1933.

\bibitem{MishraFFA}
Bhawesh Mishra.
\newblock {Prime power residues and linear coverings of vector space over $\mathbb{F}_{q}$}.
\newblock {\em Finite Fields Appl.}, 89, 2023.

\bibitem{NeukrichCohom}
J\"{u}rgen Neukirch, Alexander Schmidt, and Kay Wingberg.
\newblock {\em Cohomology of number fields}, volume 323 of {\em Grundlehren der mathematischen Wissenschaften [Fundamental Principles of Mathematical Sciences]}.
\newblock Springer-Verlag, Berlin, second edition, 2008.

\bibitem{Roquette}
Peter Roquette.
\newblock {\em The {B}rauer-{H}asse-{N}oether theorem in historical perspective}, volume~15 of {\em Schriften der Mathematisch-Naturwissenschaftlichen Klasse der Heidelberger Akademie der Wissenschaften [Publications of the Mathematics and Natural Sciences Section of Heidelberg Academy of Sciences]}.
\newblock Springer-Verlag, Berlin, 2005.

\bibitem{SS}
Andrzej Schinzel and Mariusz Ska\l{}ba.
\newblock {On power residues}.
\newblock {\em Acta Arith.}, 108(1):77--94, 2003.

\bibitem{Sk}
Mariusz Ska\l{}ba.
\newblock On sets which contain a $q^{th}$ power residue for almost all prime modulus.
\newblock {\em Colloq. Math.}, 102(1):67--71, 2005.

\bibitem{Wang1}
Shianghaw Wang.
\newblock A counter-example to {G}runwald's theorem.
\newblock {\em Ann. of Math. (2)}, 49:1008--1009, 1948.

\bibitem{Wang2}
Shianghaw Wang.
\newblock On {G}runwald's theorem.
\newblock {\em Ann. of Math. (2)}, 51:471--484, 1950.

\bibitem{Effective}
Song Wang.
\newblock Grunwald-{W}ang theorem, an effective version.
\newblock {\em Sci. China Math.}, 58(8):1589--1606, 2015.

\bibitem{Whaples}
George Whaples.
\newblock Non-analytic class field theory and {G}r\"{u}nwald's theorem.
\newblock {\em Duke Math. J.}, 9:455--473, 1942.

\end{thebibliography}
\end{document}